\theoremstyle{plain}
\newtheorem{Theorem}{Thm}[section]
\newtheorem{Thm}[Theorem]{Theorem}
\newtheorem{Lem}[Theorem]{Lemma}
\newtheorem{Cor}[Theorem]{Corollary}
\newtheorem{Prop}[Theorem]{Proposition}
\newtheorem*{Thm*}{Theorem}
\theoremstyle{definition}
\newtheorem{Def}[Theorem]{Definition}
\newtheorem{Rem}[Theorem]{Remark}
\newtheorem{Exm}[Theorem]{Example}
\renewenvironment{proof}[1][\proofname]{\par
\pushQED{\qed}%
\normalfont \topsep6\p@\@plus6\p@\relax
\trivlist
\item[\hskip\labelsep
\scshape
#1\@addpunct{.}]\ignorespaces
}{%
\popQED\endtrivlist\@endpefalse
}
\newcommand\mbb{\mathbb}
\newcommand\mcal{\mathcal}
\newcommand\A{\mbb{A}}
\newcommand\C{\mbb{C}}
\newcommand\N{\mbb{N}}
\renewcommand\P{\mbb{P}}
\newcommand\Q{\mbb{Q}}
\newcommand\R{\mbb{R}}
\newcommand\V{\mcal{V}}
\newcommand\ol{\overline}
\newcommand\wh{\widehat}
\newcommand\into{\mapsto}
\DeclareMathOperator\topint{int}
\DeclareMathOperator\ev{ev}
\DeclareMathOperator\T{T}
\DeclareMathOperator\ex{Ex}
\DeclareMathOperator\exr{Exr}
\DeclareMathOperator\codim{codim}
\DeclareMathOperator\co{co}
\newcommand\DX[1][X]{#1^\ast}
\newcommand\DP[1][n]{(\P^{#1})^\ast}
\newcommand\DV[1][V]{#1^\ast}
\newcommand\CN[1][X]{{\rm{CN}(#1)}}
\newcommand\Xreg[1][X]{#1_{{\rm{reg}}}}
\newcommand\Xsing[1][X]{#1_{{\rm{sing}}}}
\begin{document}
\title{Algebraic Boundaries of Convex Semi-algebraic Sets}
\author{Rainer Sinn}
\address{Georgia Institute of Technology, Atlanta, USA}
\email{sinn@math.gatech.edu}
\subjclass[2010]{Primary: 52A99, 14N05, 14P10; Secondary: 51N35, 14Q15}
\keywords{algebraic boundary, convex semi-algebraic set, projective dual variety}
\date{}

\begin{abstract}
We study the algebraic boundary of a convex semi-algebraic set via duality in convex and algebraic geometry. We generalize the correspondence of facets of a polytope with the vertices of the dual polytope to general semi-algebraic convex sets. In this case, exceptional families of extreme points might exist and we characterize them semi-algebraically. We also give an algorithm to compute a complete list of exceptional families, given the algebraic boundary of the dual convex set.
\end{abstract}

\maketitle

\section{Introduction}
The algebraic boundary of a semi-algebraic set is the smallest algebraic variety containing its boundary in the euclidean topology. For a full-dimensional polytope $\R^n$, it is the hyperplane arrangement associated to its facets which has been studied extensively in discrete geometry and complexity theory in linear programming \cite{LoeStVinMR2946462}.
The algebraic boundary of a convex set which is not a polytope has recently been considered in other special cases, most notably the convex hull of a variety by Ranestad and Sturmfels, cf.~\cite{RanSt} and \cite{RanStMR2911165}. This class includes prominent families such as the moment matrices of probability distributions and the highly symmetric orbitopes. It does not include examples such as hyperbolicity cones and spectrahedra, which have received attention from applications of semi-definite programming in polynomial optimisation, see \cite{SIAMbook} and \cite{VinnMR2962792}, and statistics of Gaussian graphical models, see \cite{StuUhlMR2652308}. 

First steps towards using the algebraic boundary of a spectrahedron for a complexity analysis of semi-definite programming have been taken by Nie, Ranestad, and Sturmfels \cite{NieRanStMR2546336}. For semi-definite liftings of convex semi-algebraic sets via Lasserre relaxations or theta body construction, the singularities of the algebraic boundary on the convex set give obstructions, cf.~\cite{NetPlauSchMR2600247}, \cite{GouNetMR2837559}.

So algebraic boundaries are central objects in applications of algebraic geometry to convex optimisation and statistics. In this paper, we want to consider the class of all sets for which the algebraic boundary is an algebraic hypersurface: convex semi-algebraic sets with non-empty interior.
Our goal in this paper is to extend the study of the algebraic boundary of the convex hull of a variety started by Ranestad and Sturmfels in \cite{RanSt} and \cite{RanStMR2911165} to general convex semi-algebraic sets. The most natural point of view in the general setting is via convex duality and its algebraic counterpart in projective algebraic geometry. The first main theorem generalizes and implies the correspondence between facets of a polytope with vertices of its dual polytope.

\begin{Thm*}[Corollary \ref{Thm:DualIrrCompAffine}]
Let $C\subset\R^n$ be a compact convex semi-algebraic set with $0\in\topint(C)$. Let $Z$ be an irreducible component of the Zariski closure of the set of extreme points of its dual convex body. Then the variety dual to $Z$ is an irreducible component of the algebraic boundary of $C$.
\end{Thm*}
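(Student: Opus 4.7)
The plan is to construct a bridge between convex polar duality ($C \leftrightarrow C^\circ$) and projective duality ($Z \leftrightarrow \DV[Z]$) via the observation that the affine hyperplane $H_{x_0} = \{y : \langle x_0, y\rangle = 1\} \subseteq \R^n$ corresponds, under the standard identification of hyperplanes of $\P^n$ with points of $\DP$, to the point of $\DP$ dual to $x_0 \in \R^n \subset \P^n$. Consequently, whenever such a supporting hyperplane of $C^\circ$ is tangent to $Z$ at a smooth point $y_0$, its normal $x_0$ lies in $\DV[Z] \cap \partial C$, and running over a dense family of such $y_0$ will identify $\DV[Z]$ as an irreducible component of $\partial_a C$.

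First I would fix a Zariski-generic $y_0 \in Z$ that is simultaneously a smooth point of $Z$, an exposed extreme point of $C^\circ$ (available densely by Straszewicz's theorem), and not on any other irreducible component of the Zariski closure of the extreme points. Since the extreme points on $Z$ are Zariski-dense in $Z$ and $Z(\R)$ is locally a smooth real manifold of dimension $d := \dim Z$ near $y_0$, every direction in $T_{y_0} Z(\R)$ is realized by some smooth real curve $\gamma(t)$ of extreme points through $y_0$. Now pick any $x_0 \in \partial C$ in the normal cone of $C^\circ$ at $y_0$, normalized by $\langle x_0, y_0\rangle = 1$. Along $\gamma$, the function $t \mapsto \langle x_0, \gamma(t)\rangle$ is bounded above by $1$ and achieves $1$ at $t = 0$, forcing $\langle x_0, \gamma'(0)\rangle = 0$. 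Varying $\gamma$ yields $T_{y_0} Z(\R) \subseteq H_{x_0}$, and complexifying (legitimate at a smooth real point where the real tangent space spans the complex tangent space) gives $T_{y_0} Z \subseteq H_{x_0}$. Thus $H_{x_0}$ is a hyperplane tangent to $Z$ at the smooth point $y_0$, i.e.\ $x_0 \in \DV[Z]$.

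To promote this to the full theorem, I would let $y_0$ range over the generic extreme points of $Z$ and $x_0$ range over the normal cone of $C^\circ$ at $y_0$ intersected with $\partial C$. The resulting semi-algebraic subset of $\partial C$ has Zariski closure contained in both $\partial_a C$ and $\DV[Z]$. A fiber-dimension count then shows this sweep has full dimension $n-1$: at a generic $y_0$ the normal cone has the expected dimension $n-d$, contributing an $(n-1-d)$-dimensional projective family of supporting hyperplanes and matching the fiber dimension of the conormal variety of $Z$. Combined with irreducibility of $Z$ (hence of $\DV[Z]$) and purity of $\partial_a C$ in codimension one, this forces $\DV[Z]$ to be an irreducible component of $\partial_a C$. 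The main obstacle I expect is precisely this dimension claim, $\dim N_{y_0}(C^\circ) = n-d$ at generic $y_0$ on a $d$-dimensional component $Z$: without it, $\DV[Z]$ could collapse to lower dimension and fail to be a component of $\partial_a C$. A secondary subtlety is the passage from real to complex tangent spaces, standard but worth invoking explicitly at smooth real points of a real variety defined over $\R$.
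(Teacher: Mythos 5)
Your first step (a supporting hyperplane of $C^o$ at a smooth extreme point $y_0\in Z$ contains $\T_{y_0}Z$, so its normal lies on $\DX[\ol{Z}]\cap\partial C$) is sound and is essentially the tangency argument of Proposition \ref{Prop:DualAlgBound}; the real-to-complex passage at a smooth real point is standard. But you have correctly flagged, and not closed, the decisive gap: the claim that $\dim N_{C^o}(\{y_0\})=n-d$ for a generic extreme point $y_0$ on a $d$-dimensional irreducible component $Z$ of $\ex_a(C^o)$. The inequality $\dim N_{C^o}(\{y_0\})\le n-d$ is automatic, since the normal cone sits inside $(\T_{y_0}Z)^\perp$, but the lower bound is exactly as hard as the theorem itself; invoking ``expected dimension'' is not a justification. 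In fact this equality is one direction of Corollary \ref{Thm:DualCondAffine}, which the paper \emph{deduces} from the present theorem rather than using as an input.

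The paper sidesteps the normal-cone dimension count entirely. The ingredient your sketch lacks is the unnumbered Lemma preceding Corollary \ref{Cor:ExpExtRay}: because $Z$ is an irreducible \emph{component} of $\ex_a(C^o)$, the other components $Z_2,\dots,Z_r$ meet $\partial C^o$ in compact sets on which a generic exposing functional $\ell_0$ for a generic exposed extreme point $y_0\in Z$ stays strictly above its minimum; by compactness and continuity, so does every $\ell$ in a full $(n-1)$-dimensional semi-algebraic neighbourhood $U$ of $\ell_0$ in $\partial C^o$. After intersecting with the dense open set of functionals exposing a single extreme point (Proposition \ref{Prop:ExpExtPoi}), every $\ell\in U$ exposes an extreme point on $Z$ and is therefore a tangent hyperplane to $\ol{Z}$; so $U$ is an $(n-1)$-dimensional semi-algebraic piece of $\DX[\ol{Z}](\R)$ inside $\partial C$, and since $\DX[\ol{Z}]$ is an irreducible hypersurface (Remark \ref{Rem:ExDual}), it must be an irreducible component of $\partial_a C$. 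Your normal-cone dimension claim then \emph{follows} by looking at the fibers of $U\to Z$, rather than preceding the argument. Note also that your sketch nowhere uses that $Z$ is an irreducible \emph{component} (as opposed to a proper irreducible subvariety with dense extreme points) of $\ex_a(C^o)$ --- a warning sign, since for such proper subvarieties both the normal-cone dimension claim and the conclusion are false in general, as Remark \ref{Rem:twoParabolas} illustrates.
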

For polytopes, this theorem is the whole story. In the general semi-algebraic case, not every irreducible component of the algebraic boundary of $C$ arises in this way, as we will see below. We study the exceptional cases and give a complete semi-algebraic description of the exceptional families of extreme points in terms of convex duality (normal cones) and a computational way of getting a list of potentially exceptional strata from the algebraic boundary of the dual. This proves an assertion made by Sturmfels and Uhler in \cite[Proposition 2.4]{StuUhlMR2652308}.

The main techniques come from the duality theories in convex and projective algebraic geometry.
For an introduction to convex duality, we refer to Barvinok's textbook \cite{BarvinokMR1940576}. The duality theory for projective algebraic varieties is developed in several places, e.g.~Harris \cite{HarrisiMR1182558}, Tevelev \cite{TevMR2113135}, or Gelfand-Kapranov-Zelevinsky \cite{GelMR2394437}.

This article is organized as follows: In Section \ref{sec:Prelim}, we introduce the algebraic boundary of a semi-algebraic set and discuss some special features of convex semi-algebraic sets coming from their algebraic boundary. The section sets the technical foundation for Section \ref{sec:AlgBound}, where we prove the main results of this work.

\section{The Algebraic Boundary and Convexity}\label{sec:Prelim}
This section is supposed to be introductory. We will fix notation and observe some basic features of convex semi-algebraic sets, their algebraic boundary, and some special features relying on this algebraic structure. The main results will be proven in the following section.
\begin{Def}
Let $S\subset \R^n$ be a semi-algebraic set.
The \emph{algebraic boundary} of $S$, denoted as $\partial_a S$, is the Zariski closure in $\A^n$ of the euclidean boundary of $S$.
\end{Def}

\begin{Rem}
In this paper, we fix a subfield $k$ of the complex numbers. The most important choices to have in mind are the reals, the complex numbers or the rationals. When we say Zariski closure, we mean with respect to the $k$-Zariski topology, i.e.~the topology on $\C^n$ (resp.~$\P(\C^{n+1})$) whose closed sets are the algebraic sets defined by polynomials (resp.~homogeneous polynomials) with coefficients in $k$. The set $\C^n$ (resp.~$\P(\C^{n+1})$) equipped with the $k$-Zariski topology is usually denoted $\A^n_k$ (resp.~$\P^n_k$). We drop the field $k$ in our notation. The statements in this paper are true over any subfield $k$ of the complex numbers given that the semi-algebraic set in consideration can be defined by polynomial inequalities with coefficients in $k\cap \R$.

If we are interested in symbolic computation, we tend to consider semi-algebraic sets defined by polynomial inequalities with coefficients in $\Q$ and take Zariski closures in the $\Q$-Zariski topology.
\end{Rem}

We first want to establish that the algebraic boundary of a convex body is a hypersurface.
\begin{Def}
A subset of $\R^n$ is called \emph{regular} if it is contained in the closure (in the euclidean topology) of its interior.
\end{Def}

\begin{Rem}
Every convex semi-algebraic set with non-empty interior is regular and the complement of a convex semi-algebraic set is also regular.
\end{Rem}

\begin{Lem}\label{Lem:AlgBoundHyp}
Let $\emptyset\neq S\subset\R^n$ be a regular semi-algebraic set and suppose that its complement $\R^n\setminus S$ is also regular and non-empty.
Each irreducible component of the algebraic boundary of $S$ has codimension $1$ in $\A^n$, i.e.~$\partial_a S$ is a hypersurface.
\end{Lem}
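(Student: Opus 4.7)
The plan is to first establish that the euclidean boundary $\partial S$ has semi-algebraic dimension $n-1$, and then to transfer this information to the Zariski closure.

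For the dimension of $\partial S$, I would use the partition $\R^n = \topint(S) \sqcup \partial S \sqcup \topint(\R^n\setminus S)$. By hypothesis, $S$ and $\R^n\setminus S$ are regular and non-empty, so both $\topint(S)$ and $\topint(\R^n\setminus S)$ are non-empty open sets. If $\partial S$ had semi-algebraic dimension at most $n-2$, then $\R^n\setminus\partial S$ would be connected (removing a semi-algebraic set of codimension at least two from $\R^n$ preserves connectedness), contradicting its decomposition as a disjoint union of two non-empty open subsets. Since the euclidean boundary of a semi-algebraic set is nowhere dense, one obtains $\dim \partial S = n-1$. Because the Zariski closure of a semi-algebraic set has the same dimension as the set, $\dim \partial_a S = n-1$ as well, so each irreducible component of $\partial_a S$ has dimension at most $n-1$. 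The remaining and main task is to exclude components of strictly smaller dimension.

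Suppose for contradiction that some irreducible component $V$ of $\partial_a S$ has $\dim V < n-1$. Because $\partial_a S$ is the minimal algebraic set containing $\partial S$, the intersection $V\cap\partial S$ must be Zariski-dense in $V$: otherwise, replacing $V$ in the decomposition by the Zariski closure of $V\cap\partial S$ would yield a strictly smaller algebraic set still containing $\partial S$. Choose a point $p\in V\cap\partial S$ not lying on any other irreducible component, and a euclidean open ball $U$ around $p$ disjoint from all other components; then $\partial S\cap U\subseteq V\cap U$. By the regularity of $S$ and of its complement, every neighborhood of $p$ meets both $\topint(S)$ and $\topint(\R^n\setminus S)$, and since $V\cap U$ has codimension at least two in $U$, both of these open sets actually meet $U\setminus V$. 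However, $U\setminus V$ is connected (a ball minus a semi-algebraic subset of codimension $\geq 2$), while $\topint(S)\cap(U\setminus V)$ and $\topint(\R^n\setminus S)\cap(U\setminus V)$ are disjoint non-empty open sets whose union is $U\setminus V$ (as $\partial S\cap U\subseteq V$). This contradiction forces $\dim V = n-1$.

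The main obstacle will be the lower-bound argument of the last paragraph: in particular, establishing Zariski-density of $V\cap\partial S$ in $V$ together with the careful localization to a euclidean ball that isolates $V$ from the other components. The rest amounts to fairly standard facts about dimension and connectedness for semi-algebraic sets.
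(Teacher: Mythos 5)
Your proof is correct and uses essentially the same approach as the paper: the key step in both is that removing a semi-algebraic set of codimension at least two from a ball leaves it connected (the paper invokes this as Bochnak--Coste--Roy, Lemma 4.5.2), applied near a boundary point where regularity of $S$ and of its complement provides two disjoint nonempty open pieces. The only difference is presentational: the paper phrases the argument as a local dimension bound $\dim(\partial S \cap B(x,\epsilon))\geq n-1$ at every point $x\in\partial S$, whereas you make explicit the choice of a point on a putative low-dimensional component $V$ that is isolated from the other components of $\partial_a S$ --- a step the paper leaves implicit in its final sentence.
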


\begin{proof}
By Bochnak-Coste-Roy \cite[Proposition 2.8.13]{BochnakMR1659509}, $\dim(\partial S)\leq n-1$. Conversely, we prove that the boundary $\partial S$ of $S$ has local dimension $n-1$ at each point $x\in \partial S$: Let $x\in\partial S$ be a point and take $\epsilon >0$. Then $\topint(S)\cap {\rm B}(x,\epsilon)$ and $\topint(\R^n\setminus S)\cap {\rm B}(x,\epsilon)$ are non-empty, because both $S$ and $\R^n\setminus S$ are regular. Applying \cite[Lemma 4.5.2]{BochnakMR1659509}, yields that
\[
\dim(\partial S\cap {\rm B}(x,\epsilon)) = \dim({\rm B}(x,\epsilon)\setminus (\topint(S)\cup (\R^n\setminus \ol{S}))) \geq n-1
\]
Therefore, all irreducible components of $\partial_a S={\rm cl}_{Zar}(\partial S)$ have dimension $n-1$.
\end{proof}

\begin{Exm}
The assumption of $S$ being regular cannot be dropped in the above lemma. Write $h:=x^2+y^2+z^2-1\in\R[x,y,z]$. Let $S$ be the union of the unit ball with the first coordinate axis, i.e.~$S = \{(x,y,z)\in\R^3\colon y^2h(x,y,z)\leq 0,z^2h(x,y,z)\leq 0\}$. The algebraic boundary of $S$ is the union of the sphere $\V(h)$ and the line $\V(y,z)$, which is a variety of codimension $1$ with a lower dimensional irreducible component.
\end{Exm}

\begin{Rem}
In the above proof of Lemma \ref{Lem:AlgBoundHyp}, we argue over the field of real numbers. The algebraic boundary of $S$, where the Zariski closure is taken with respect to the $k$-Zariski topology for a different field $k$, is also a hypersurface. It is defined by the reduced product of the Galois conjugates of the polynomial defining $\partial_a S$ over $\R$, whose coefficients are algebraic numbers over $k$.
\end{Rem}

\begin{Cor}
Let $C\subset \R^n$ be a compact semi-algebraic convex set with non-empty interior. Its algebraic boundary is a hypersurface.\qed
\end{Cor}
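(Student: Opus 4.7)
The plan is to reduce this directly to Lemma \ref{Lem:AlgBoundHyp}, which already carries the substance. To apply the lemma I need to check three things about $C$: that it is nonempty, that it is regular, and that $\R^n\setminus C$ is nonempty and regular.

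Nonemptyness of $C$ is immediate from $\topint(C)\neq\emptyset$, and nonemptyness of $\R^n\setminus C$ follows from compactness, since a bounded subset of $\R^n$ is proper. Regularity of $C$ and of its complement is already asserted in the remark preceding Lemma \ref{Lem:AlgBoundHyp}, so in principle I could simply invoke it; but to keep the proof self-contained I would include one-line justifications. For $C$: fix $x_0\in\topint(C)$ and let $c\in C$; then convexity together with a small ball around $x_0$ implies that the half-open segment from $x_0$ to $c$ lies in $\topint(C)$, so $c\in\ol{\topint(C)}$. For $\R^n\setminus C$: any boundary point $x\in\partial C$ admits a supporting hyperplane by convexity, and moving from $x$ strictly into the open half-space not containing $C$ produces points of $\topint(\R^n\setminus C)$ accumulating at $x$, while points of $\R^n\setminus\ol{C}$ already lie in $\topint(\R^n\setminus C)$.

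With these hypotheses in hand, Lemma \ref{Lem:AlgBoundHyp} immediately yields that every irreducible component of $\partial_a C$ has codimension one in $\A^n$, i.e.\ that $\partial_a C$ is a hypersurface. I do not expect any real obstacle: the corollary is essentially a packaging of the lemma with elementary convex geometry. The one place where convexity is genuinely needed rather than cosmetic is the regularity of the complement, which the earlier ball-plus-axis example shows can fail for general regular semi-algebraic sets.
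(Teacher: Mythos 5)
Your proposal is correct and matches the paper's (implicit) reasoning exactly: the corollary is stated with \verb|\qed| precisely because Remark 2.3 already records regularity of $C$ and its complement, and the lemma then applies verbatim. One small simplification: since $C$ is compact, hence closed, $\R^n\setminus C$ is open and therefore equal to its own interior, so its regularity is immediate without invoking supporting hyperplanes; the hyperplane argument is only needed for non-closed convex sets as in Remark 2.3's general statement.
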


This property characterises the semi-algebraic compact convex sets.
\begin{Prop}
A compact convex set with non-empty interior is semi-algebraic if and only if its algebraic boundary is a hypersurface.
\end{Prop}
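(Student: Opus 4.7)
The forward direction is just the corollary we have already stated. The content is in the converse: given a compact convex body $C$ with non-empty interior whose algebraic boundary $\partial_a C$ is a hypersurface, I need to exhibit $C$ as a semi-algebraic set.

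First I would extract a defining equation. Since $k[x_1,\ldots,x_n]$ is a UFD, a pure codimension-$1$ $k$-subvariety of $\A^n$ is cut out by a single (squarefree) polynomial, so $\partial_a C = \V(f)$ for some $f \in k[x_1,\ldots,x_n]$. Write $\V_\R(f) := \{x\in\R^n : f(x)=0\}$; this is a real algebraic, hence semi-algebraic, set. By definition of the algebraic boundary, $\partial C \subseteq \V_\R(f)$, so $\topint(C) \subseteq \R^n \setminus \V_\R(f)$.

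Next I would identify $\topint(C)$ with a full connected component of the complement. Since $\topint(C)$ is convex and non-empty, it is connected, so it lies in a single connected component $U$ of $\R^n\setminus \V_\R(f)$. Suppose for contradiction that $U \not\subseteq \topint(C)$; pick $p\in U\setminus\topint(C)$. Because $p\notin\V_\R(f)$, in particular $p\notin\partial C$, and since $C = \topint(C)\cup\partial C$ for a compact convex body with non-empty interior, $p\notin C$. The set $U$ is open and path-connected, so there is a path in $U$ from an interior point of $C$ to $p$. This path must meet $\partial C$, but $\partial C\subseteq \V_\R(f)$ is disjoint from $U$, contradiction. Hence $\topint(C) = U$ is a connected component of the semi-algebraic set $\R^n\setminus \V_\R(f)$.

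Finally I would conclude by invoking standard semi-algebraic geometry: connected components of semi-algebraic sets are semi-algebraic (see e.g.~\cite[Theorem 2.4.5]{BochnakMR1659509}), so $\topint(C)$ is semi-algebraic, and thus $C = \ol{\topint(C)}$ is semi-algebraic as the euclidean closure of a semi-algebraic set. The only real subtlety is the connectedness argument showing that $U$ cannot extend beyond $\topint(C)$; once one has $\partial C \subseteq \V_\R(f)$, however, this is a soft separation argument with no computation involved.
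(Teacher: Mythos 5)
Your proposal is correct and follows the same route as the paper: express $\partial_a C$ as a real hypersurface, observe that $C$ is the closure of connected components of the complement $\R^n\setminus (\partial_a C)(\R)$, and invoke the Bochnak--Coste--Roy facts that connected components and euclidean closures of semi-algebraic sets are again semi-algebraic. You in fact sharpen the paper's phrasing slightly by showing, via the clean path-connectedness argument, that $\topint(C)$ is \emph{exactly one} connected component of the complement rather than merely a union of finitely many.
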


\begin{proof}
The converse follows from results in semi-algebraic geometry. Namely if the algebraic boundary $\partial_a C$ is an algebraic hypersurface, its complement $\R^n\setminus (\partial_a C)(\R)$ is a semi-algebraic set and the closed convex set $C$ is the closure of the union of finitely many of its connected components. This is semi-algebraic by Bochnak-Coste-Roy \cite[Proposition 2.2.2 and Theorem 2.4.5]{BochnakMR1659509}.
\end{proof}

By the construction of homogenisation in convexity, the algebraic boundary of a pointed and closed convex cone relates to the algebraic boundary of a compact base via the notion of affine cones in algebraic geometry.
\begin{Rem}\label{Rem:ConeProj}
Let $C\subset \R^n$ be a compact semi-algebraic convex set and let $\co(C)\subset \R\times \R^n$ be the convex cone over $C$ embedded at height $1$, i.e.~$\co(C) = \{(\lambda,\lambda x)\colon \lambda \geq 0, x\in C\}$. Since a point $(1,x)$ lies in the boundary of $\co(C)$ if and only if $x$ is a boundary point of $C$, the affine cone $\{(\lambda,\lambda x)\colon \lambda\in\C, x\in \partial_a C\}$ over the algebraic boundary of $C$ is a constructible subset of the algebraic boundary of $\co(C)$. More precisely, we mean that $\partial_a \co(C)= \wh{X}$, where $X$ is the projective closure of $\partial_a C$ with respect to the embedding $\A^n\into \P^n$, $(x_1,\ldots,x_n)\mapsto (1:x_1:\ldots:x_n)$.
\end{Rem}

Recall that a closed convex cone $C\subset \R^n$ is called pointed if $C\cap (-C) = \{0\}$, i.e.~it does not contain a line.
\begin{Cor}
Let $C\subset \R^{n+1}$ be a pointed closed semi-algebraic convex cone. Its algebraic boundary is a hypersurface in $\A^{n+1}$ and an algebraic cone.
In particular, it is the affine cone over its projectivisation in $\P^{n}$, i.e.
\[
\pushQED{\qed}
\wh{\P\partial_a C} = \partial_a C. \qedhere
\popQED
\] 
\end{Cor}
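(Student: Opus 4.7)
The plan is to prove the two assertions in sequence: first that $\partial_a C$ is a hypersurface, using Lemma \ref{Lem:AlgBoundHyp}, and then that this hypersurface is invariant under the ambient scaling action, which is enough to conclude it is an algebraic cone and therefore equals the affine cone over its projectivisation $\mathbb{P}\partial_a C \subset \P^n$.

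For the hypersurface claim, I would observe that $C$, being a full-dimensional closed convex semi-algebraic set, is regular by the preceding remark, and its (nonempty, because $C$ is pointed and proper) complement $\R^{n+1}\setminus C$ is also regular by the same remark. Lemma \ref{Lem:AlgBoundHyp} then applies verbatim and gives that every irreducible component of $\partial_a C$ has codimension $1$ in $\A^{n+1}$.

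For the cone claim, the idea is that $C$ is stable under multiplication by any real scalar $\lambda>0$, so its euclidean boundary $\partial C$ is likewise stable under this action. The map $\mu_\lambda\colon x\mapsto \lambda x$ is an algebraic automorphism of $\A^{n+1}$ defined over $\Q\subset k$, so it sends the $k$-Zariski closure of $\partial C$ to itself; hence $\lambda\cdot\partial_a C=\partial_a C$ for every $\lambda\in\R_{>0}$. The main technical point, and in my view the only nontrivial step, is upgrading this real scaling invariance to the statement that the vanishing ideal $I(\partial_a C)$ is homogeneous. For this I would pick any $f\in I(\partial_a C)$, decompose it into its homogeneous components $f=\sum_d f_d$, and note that for each $v\in\partial_a C$ the one-variable polynomial $\lambda\mapsto f(\lambda v)=\sum_d \lambda^d f_d(v)$ vanishes on the entire positive real half-line, hence is identically zero. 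Comparing coefficients in $\lambda$ yields $f_d(v)=0$ for all $d$ and all $v\in\partial_a C$, so each $f_d$ belongs to $I(\partial_a C)$, and the ideal is homogeneous.

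A homogeneous radical ideal in $k[x_0,\dots,x_n]$ defines an algebraic cone in $\A^{n+1}$, which by definition coincides with the affine cone over the projective variety cut out by the same ideal in $\P^n$. Applied to $\partial_a C$ this gives the identity $\widehat{\P\partial_a C}=\partial_a C$, completing the proof. The only delicate issue is the passage from $\R_{>0}$-invariance to $\C^*$-invariance of the complex zero set; the Vandermonde-style coefficient extraction above handles exactly this.
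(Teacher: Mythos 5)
Your proof is correct. The paper itself gives no written argument (the $\qed$ is in the statement), so the intended derivation is presumably by reducing to a compact base $D$ of the pointed cone and invoking Remark~\ref{Rem:ConeProj} ($\partial_a \co(D)=\wh{X}$ with $X$ the projective closure of $\partial_a D$) together with the hypersurface corollary for compact convex bodies. Your route is genuinely different on the ``cone'' half: rather than reducing to the compact case, you argue directly that $\partial C$ is $\R_{>0}$-stable, push this through Zariski closure, and then use a Vandermonde-style coefficient extraction to show $I(\partial_a C)$ is homogeneous. This is self-contained and in fact supplies exactly the detail that the paper's Remark~\ref{Rem:ConeProj} leaves implicit (the upgrade from real positive scaling of the boundary to complex scaling of the Zariski closure), so it is, if anything, the more careful argument. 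Your treatment of the hypersurface half via Lemma~\ref{Lem:AlgBoundHyp} matches the paper's intent; you also correctly noticed that full-dimensionality (non-empty interior) is needed here even though the stated corollary omits it, consistent with the surrounding results that all carry that hypothesis.

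Two small points of hygiene. First, to keep the dilation $\mu_\lambda$ defined over $k$ for an arbitrary subfield $k\subset\C$, restrict to $\lambda\in\Q_{>0}$ (you do say ``defined over $\Q\subset k$'' but then conclude invariance for all $\lambda\in\R_{>0}$); $\Q_{>0}$ is infinite, which is all the coefficient-extraction step needs. Second, non-emptiness of the complement should be noted (as you did) since a pointed cone is never all of $\R^{n+1}$, so Lemma~\ref{Lem:AlgBoundHyp} applies. With those caveats your argument stands.
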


We will now take a look at convex duality for semi-algebraic sets. Given a compact convex set $C\subset \R^n$, we write $C^o = \{\ell\in(\R^n)^\ast\colon \ell(x)\geq -1 \text{ for all } x\in C\}$ for the dual convex set. We use the notation $\Xreg$ for the set of all regular (or smooth) points of an algebraic variety $X$.
\begin{Prop}\label{Prop:ExpExtPoi}
Let $C\subset \R^n$ be a compact semi-algebraic convex set with $0\in\topint(C)$ and set $S:=\partial C^o \cap \Xreg[(\partial_a C^o)]$. For every $\ell\in S$, the face supported by $\ell$ is a point. The set $S$ is an open and dense (in the euclidean topology) semi-algebraic subset of the set $\partial C^o$ of all supporting hyperplanes to $C$.
\end{Prop}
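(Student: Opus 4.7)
The plan is to leverage the local smoothness of $\partial_a C^o$ at each $\ell_0\in S$ to show that $C^o$ is locally a smoothly bounded convex region there, and then invoke the standard exposed-face/supporting-hyperplane correspondence to conclude $F_{\ell_0}$ is a single point. First I would record that $C^o$ is itself compact semi-algebraic convex with $0\in\topint(C^o)$ (compactness from $0\in\topint(C)$, origin in interior from boundedness of $C$, semi-algebraicity by Tarski--Seidenberg applied to the defining condition $\forall x\in C\colon\ell(x)\geq -1$), so the earlier Corollary makes $\partial_a C^o$ an algebraic hypersurface and $\Xreg[(\partial_a C^o)]$ Zariski-open (hence Euclidean-open) and dense in it.

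The key local step is, for $\ell_0\in S$ and a local defining polynomial $f$ of $\partial_a C^o$ near $\ell_0$ with $\nabla f(\ell_0)\neq 0$, to find a Euclidean neighborhood $U$ of $\ell_0$ with $C^o\cap U=\{f\leq 0\}\cap U$ after a sign choice. The idea is that each of the two connected open components of $U\setminus\{f=0\}$ is disjoint from $\partial C^o$, so by openness of $\topint(C^o)$ and $\R^n\setminus C^o$ and connectedness each lies entirely in one of them; regularity of $C^o$ and of its complement (cf.\ the Remark preceding Lemma~\ref{Lem:AlgBoundHyp}) then forces these two assignments to be opposite. Consequently $C^o$ is locally a smoothly bounded convex region, its tangent cone $T_{\ell_0}(C^o)$ is a closed half-space, and the normal cone $N_{C^o}(\ell_0)\subset\R^n$ is a ray.

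To deduce $|F_{\ell_0}|=1$, I would identify $N_{C^o}(\ell_0)$ with the cone $\{-\lambda x\colon\lambda\geq 0,\, x\in F_{\ell_0}\}$: the inclusion $\supset$ is immediate from $\ell(x)\geq -1$ for $\ell\in C^o$, and for $\subset$ a nonzero $v\in N_{C^o}(\ell_0)$ attains a strictly positive maximum $m=v(\ell_0)$ on $C^o$ (using $0\in\topint(C^o)$), after which biduality $C=(C^o)^o$ forces $-v/m\in F_{\ell_0}$. Since this cone is a ray and $F_{\ell_0}$ is non-empty by compactness of $C$, the face $F_{\ell_0}$ lies on a single ray through the origin, and the affine normalization $\ell_0(x)=-1$ singles out one point on it.

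For the remaining assertions about $S$: openness in $\partial C^o$ is immediate from openness of $\Xreg[(\partial_a C^o)]$ in $\partial_a C^o$; density holds because $\partial C^o\setminus S\subset \Xsing[(\partial_a C^o)]$ has real dimension at most $n-2$ whereas $\partial C^o$ has pure real dimension $n-1$ by the proof of Lemma~\ref{Lem:AlgBoundHyp}; and $S$ is semi-algebraic as the intersection of the semi-algebraic set $\partial C^o$ (semi-algebraic because $C^o$ is) with the Zariski-open, hence semi-algebraic, set $\Xreg[(\partial_a C^o)]$. The step I expect to be the main technical obstacle is the local equality $\partial C^o\cap U=\partial_a C^o\cap U$, since generically a smooth real point of $\partial_a C^o$ may very well lie in $\topint(C^o)$ (as on the ``hidden'' arc in the convex hull of a disk with an exterior point); the hypothesis $\ell_0\in\partial C^o$ together with the connectedness argument above is exactly what excludes that case.
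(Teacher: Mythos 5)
Your proof is correct, and its core mechanism coincides with the paper's: at a regular point $\ell_0$ of the hypersurface $\partial_a C^o$, the tangent hyperplane --- hence the normal direction --- is unique, and this forces the exposed face to be a single point. The formulations differ, though. The paper takes an arbitrary $x$ with $\ev_x$ supporting $C^o$ at $\ell$, asserts that $\ev_x$ must therefore be the unique tangent hyperplane $\T_\ell \partial_a C^o$, and runs the same argument on any $y,z\in C$ with $x=\frac12(y+z)$ to conclude $x=y=z$. You instead establish the local identity $C^o\cap U=\{f\le 0\}\cap U$, deduce from it that the normal cone $N_{C^o}(\ell_0)$ is a ray, identify that ray with $\{-\lambda x\colon\lambda\ge 0,\ x\in F_{\ell_0}\}$ via biduality $(C^o)^o=C$, and then read off $|F_{\ell_0}|=1$ from the normalization $\ell_0(x)=-1$. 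Your route is a bit longer but makes explicit the step the paper elides --- that $(\partial_a C^o)(\R)$ locally coincides with $\partial C^o$ near the regular point $\ell_0$, so that the algebraic boundary really does lie in one halfspace of the supporting hyperplane. Your argument via regularity of $C^o$ and of its complement, together with the connectedness of the two components of $U\setminus\{f=0\}$, is a clean way to close that gap, and it is exactly the point you correctly flag as the main technical obstacle. The openness, density, and semi-algebraicity of $S$ are handled essentially as the paper intends (the paper omits them as routine).
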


\begin{proof}
If $\ev_x$ is a supporting hyperplane to $C^o$ at $\ell$, then $\ell(x)=-1$ and $C^o$ lies in one halfspace defined by $\ev_x$. Therefore, $(\partial_a C^o)(\R)$ lies locally around $\ell$ in one halfspace defined by $\ev_x$ and so $\ev_x$ defines the unique tangent hyperplane to $\partial_a C^o$ at $\ell$.
Now we show that $x$ is an extreme point of $C$, exposed by $\ell$. Suppose $x=\frac12 (y+z)$ with $y,z\in C$, then $\ell(y)= -1$ and $\ell(z)= -1$. Since $y$ and $z$ are, by the same argument as above, also normal vectors to the tangent hyperplane $\T_\ell \partial_a C^o$, we conclude $x=y=z$.
\end{proof}

The same statement is true for convex cones: We denote the dual convex cone to $C\subset\R^{n+1}$ as $C^\vee = \{\ell\in(\R^{n+1})^\ast\colon \ell(x)\geq 0 \text{ for all }x\in C\}$.
\begin{Cor}\label{Cor:ExpExtPoi}
Let $C\subset\R^{n+1}$ be a pointed closed semi-algebraic convex cone with non-empty interior and set $S:=\partial C^\vee\cap\Xreg[(\partial_a C^\vee)]$. For every $\ell\in S$, the face supported by $\ell$ is an extreme ray of $C$. The set $S$ is open and dense (in the euclidean topology) semi-algebraic subset of $\partial C^\vee$. \qed
\end{Cor}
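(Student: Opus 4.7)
The plan is to adapt the proof of Proposition \ref{Prop:ExpExtPoi} to the conic setting. The one extra input needed is that, by the Corollary immediately preceding this one, $\partial_a C^\vee$ is itself an algebraic cone, so at every smooth point $\ell$ the tangent hyperplane $\T_\ell\partial_a C^\vee$ passes through the origin. This is precisely what will upgrade ``exposed face is a single point'' in the compact case to ``exposed face is an extreme ray'' in the conic case.

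For the first assertion, fix $\ell\in S$. Since $\ell\in\partial C^\vee\setminus\topint(C^\vee)$, the face $F_\ell:=\{x\in C:\ell(x)=0\}$ contains a nonzero vector. For any such $x$, the evaluation $\ev_x$ vanishes at $\ell$ and is nonnegative on $C^\vee$, so $\ker\ev_x$ is a supporting hyperplane to $C^\vee$ at $\ell$. The local halfspace argument from Proposition \ref{Prop:ExpExtPoi} then forces $\ker\ev_x$ to coincide with the unique tangent hyperplane $\T_\ell\partial_a C^\vee$. Both hyperplanes contain $0$---$\ker\ev_x$ tautologically, and $\T_\ell\partial_a C^\vee$ because $\partial_a C^\vee$ is a cone---so the equation $\ker\ev_x=\T_\ell\partial_a C^\vee$ determines $x$ up to a nonzero real scalar. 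Any two nonzero elements of $F_\ell$ are therefore collinear with the origin, and $F_\ell$ is the extreme ray they span.

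For the second assertion, openness of $S$ in $\partial C^\vee$ is immediate because $\Xreg[(\partial_a C^\vee)]$ is Zariski open in the hypersurface $\partial_a C^\vee$ and $\partial C^\vee\subset(\partial_a C^\vee)(\R)$. For density one observes that the singular locus $\Xsing[(\partial_a C^\vee)]$ is a proper Zariski closed subset of an $n$-dimensional hypersurface, and so has dimension at most $n-1$, whereas $\partial C^\vee$ has local dimension $n$ at every point by the same reasoning used in Lemma \ref{Lem:AlgBoundHyp} applied to the regular cone $C^\vee$ (whose complement is also regular). Deleting a lower-dimensional set therefore leaves a euclidean-dense semi-algebraic subset of $\partial C^\vee$.

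The main issue is conceptual rather than technical: one has to recognize that the correct conic analogue of ``$F_\ell$ is a point'' is ``$F_\ell$ is a ray,'' and pinpoint where the cone structure of $\partial_a C^\vee$ enters---namely, in forcing $\T_\ell\partial_a C^\vee$ to contain the origin so that $x$ is determined only up to scale. Alternatively, one could sidestep the in-place adaptation by passing to a compact base of $C$ and invoking Proposition \ref{Prop:ExpExtPoi} together with Remark \ref{Rem:ConeProj}, but the direct argument above avoids introducing auxiliary sets.
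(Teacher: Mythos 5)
Your proof is correct and fills in precisely the details the paper leaves implicit by stamping the corollary with a bare \qed: it adapts the argument of Proposition~\ref{Prop:ExpExtPoi} to the conic setting, with the decisive observation that $\T_\ell\partial_a C^\vee$ passes through the origin since $\partial_a C^\vee$ is an algebraic cone, so $\ker\ev_x=\T_\ell\partial_a C^\vee$ pins down $x$ only up to scale and the exposed face is a ray rather than a point. The density argument via comparing the local dimension of $\partial C^\vee$ (which is $n$ by the regularity of $C^\vee$ and its complement) against $\dim\Xsing[(\partial_a C^\vee)]\le n-1$ is likewise sound, and you correctly note the alternative of homogenizing via a compact base and Remark~\ref{Rem:ConeProj}.
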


\begin{Exm}\label{Exm:ExPIncMax}
(a) In the case that $C$ is a polytope, the set $S$ of regular points of the algebraic boundary is exactly the set of linear functionals exposing extreme points. Indeed, in this case the algebraic boundary of $C$ is a union of affine hyperplanes, namely the affine span of its facets. A point in $\partial C$ is a regular point of the algebraic boundary $\partial_a C$ if and only if it lies in the relative interior of a facet, cf.~Barvinok \cite[Theorem VI.1.3]{BarvinokMR1940576}. These points expose the vertices of $C^o$.\\
(b) In general, a linear functional $\ell\in\partial C^o$ exposing an extreme point of $C$ does not need to be a regular point of the algebraic boundary of $C^o$ as the following example shows:
Let $C$ be the convex set in the plane defined by the inequalities $y\geq (x+1)^2-3/2$, $y\geq (x-1)^2-3/2$ and $y\leq 1$. Consider the extreme point $x=(0,-1/2)$ of $C$. The dual face is the line segment between the vectors $(-2,1)$ and $(2,1)$, the normal vectors to the tangent lines to the curves defined by $y-(x+1)^2+3/2$ and $y-(x-1)^2+3/2$, which meet transversally in $x$.
Indeed, the linear functionals $(-2,1)$ and $(2,1)$ both expose extreme points; but they are each intersection points of a line and a quadric in the algebraic boundary of $C^o$ and so they are singular points of $\partial_a C^o$.
\end{Exm}

The extreme points (resp.~rays) of a convex set play an important role for duality. They will also be essential in a description of the algebraic boundary using the algebraic duality theory. So we fix the following notation:
\begin{Def}
(a) Let $C\subset\R^n$ be a convex semi-algebraic set. We denote by $\ex_a(C)$ the Zariski closure of the union of all extreme points of $C$ in $\A^n$.\\
(b) Let $C\subset\R^{n+1}$ be a semi-algebraic convex cone. We write $\exr_a(C)$ for the Zariski closure of the union of all extreme rays of $C$ in $\A^{n+1}$.
\end{Def}

\begin{Rem}\label{Rem:ExtPointsSemiAlg}
(a) Note that the union of all extreme points of a convex semi-algebraic set is a semi-algebraic set by quantifier elimination because the definition is expressible as a first order formula in the language of ordered rings, cf.~Bochnak-Coste-Roy \cite[Proposition 2.2.4]{BochnakMR1659509}. Therefore, its Zariski closure is an algebraic variety whose dimension is equal to the dimension of $\ex(C)$ as a semi-algebraic set, cf.~Bochnak-Coste-Roy \cite[Proposition 2.8.2]{BochnakMR1659509}. Of course, the same is true for convex cones and the Zariski closure of the union of all extreme rays.\\
(b) Note that $\exr_a(C)$ is an algebraic cone. In particular, we have
\[
\exr_a(C) = \wh{\P\exr_a(C)}.
\]
\end{Rem}

\begin{Lem}
Let $C\subset \R^n$ be a compact semi-algebraic convex set with $0\in\topint(C)$. For a general extreme point $x\in \ex_a(C)$ there is a supporting hyperplane $\ell_0\in\partial C^o$ exposing the face $x$ and a semi-algebraic neighbourhood $U$ of $\ell_0$ in $\partial C^o$ such that every $\ell\in U$ supports $C$ in an extreme point $x_\ell$ and all $x_\ell$ lie on the same irreducible component of $\ex_a(C)$ as $x$.
\end{Lem}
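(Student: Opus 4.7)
The plan is to combine Proposition \ref{Prop:ExpExtPoi} with a semi-algebraic dimension count, keeping track of which irreducible component of $\ex_a(C)$ receives the exposed extreme point.

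First I would invoke Proposition \ref{Prop:ExpExtPoi} to pass to the open and dense semi-algebraic subset $S := \partial C^o \cap \Xreg[(\partial_a C^o)]$ of $\partial C^o$ on which the map $\phi\colon \ell\mapsto x_\ell$ is well-defined, where $x_\ell$ is the unique extreme point of $C$ exposed by $\ell$. Read off from the (suitably rescaled) tangent hyperplane to the smooth hypersurface $\partial_a C^o$ at $\ell$, $\phi$ is semi-algebraic and continuous, and its image is contained in $\ex(C)\subseteq Y_1\cup\cdots\cup Y_m$, where $Y_1,\ldots,Y_m$ are the irreducible components of $\ex_a(C)$.

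Then I would stratify: for each $i$, the set
\[
S_i := \{\ell\in S\colon \phi(\ell)\in Y_i\setminus \textstyle\bigcup_{j\neq i}Y_j\}
\]
is semi-algebraic, and $\bigsqcup_i S_i$ covers all of $S$ outside the lower-dimensional locus of $\ell$ mapping into the pairwise intersections $Y_i\cap Y_j$. The crucial claim is that each $S_i$ has semi-algebraic dimension $n-1$, i.e.\ every irreducible component of $\ex_a(C)$ is accessed by $\phi$ in codimension zero. I would justify this by a fibre-dimension argument: by definition of $\ex_a(C)$, the extreme points of $C$ lying in $Y_i\setminus \bigcup_{j\neq i}Y_j$ are Zariski-dense in $Y_i$, hence of semi-algebraic dimension $\dim Y_i$; for a generic such $x$, the dual face $F_x\subseteq \partial C^o$ has dimension $n-1-\dim Y_i$ by dimension complementarity of convex duality, and its relative interior meets $S$ because the singular locus of $\partial_a C^o$ is a semi-algebraic subset of $\partial C^o$ of dimension strictly less than $n-1$. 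Adding yields $\dim S_i\geq \dim Y_i + (n-1-\dim Y_i) = n-1$.

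Finally, $S_i$ then has non-empty euclidean interior (as a subset of $\partial C^o$), and its $\phi$-image is Zariski-dense in $Y_i$; a Zariski-open dense subset $Y_i^\circ\subseteq Y_i$ is then contained in this image. For any extreme point $x\in Y_i^\circ$, pick $\ell_0$ in the euclidean interior of $S_i$ with $\phi(\ell_0)=x$ and a euclidean open semi-algebraic neighbourhood $U\subseteq S_i$ of $\ell_0$ in $\partial C^o$; every $\ell\in U$ then exposes an extreme point $x_\ell=\phi(\ell)\in Y_i$, as required. The main obstacle is the key dimension claim $\dim S_i=n-1$: ensuring that no irreducible component of $\ex_a(C)$ is fed only by singular points of $\partial_a C^o$, and controlling the dual-face dimension $\dim F_x = n-1-\dim Y_i$ for a generic extreme point $x$ in each component.
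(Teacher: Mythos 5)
Your strategy (fibre-dimension count through the incidence $\{(x,\ell)\}$) is genuinely different from the paper's, but the central step is circular and the secondary step has a gap — both of which you flag yourself at the end.

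The claim $\dim F_x = n-1-\dim Y_i$ for a generic extreme point $x\in Y_i$ is not a citable ``dimension complementarity'' fact for general convex bodies (faces of arbitrary convex sets do not satisfy the polytope relation $\dim F + \dim F^\Delta = n-1$; think of a single extreme point of a ball). What you are invoking is, in substance, exactly the dimension condition of Corollary~\ref{Thm:DualCondAffine} applied to irreducible components of $\ex_a(C)$. In the paper that condition is deduced via Theorem~\ref{Thm:DualCond}, whose ``only if'' direction relies on Theorem~\ref{Thm:DualIrrComp}, whose proof uses Corollary~\ref{Cor:ExpExtRay} — the cone version of the very lemma you are proving. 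So your proof would be circular. The only direction that is elementary is the upper bound $\dim F_x \le n-1-\dim Y_i$ (tangency forces $N_C(\{x\})\subset(\T_x Y_i)^\perp$); the lower bound is the nontrivial part and is the heart of the matter. Separately, even granted the dimension equality, your step ``relint$(F_x)$ meets $S$'' does not follow just from $\dim\Xsing[(\partial_a C^o)]<n-1$: when $\dim F_x < n-1$ the whole set $F_x$ could lie in the singular locus, so you would need an additional argument varying $x$ and controlling fibres of $\pi_2$ before the codimension count lets you dodge the singular locus.

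The paper avoids all of this. It first uses Straszewicz's theorem together with the semi-algebraic Curve Selection Lemma to reduce to the case of an exposed extreme point $y$ lying on a unique component $Z_1$ of $\ex_a(C)$, with exposing functional $\ell_y$. Since each $Z_i\cap\partial C$ is compact and $\ell_y > -1$ on $Z_i\cap\partial C$ for $i>1$, a small neighbourhood $U$ of $\ell_y$ in $\partial C^o$ still satisfies $\ell > -1$ on every $Z_i\cap\partial C$ with $i>1$; intersecting $U$ with the open dense set $S$ from Proposition~\ref{Prop:ExpExtPoi} gives the required neighbourhood of a suitable $\ell_0$. No dimension count is needed. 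If you want to salvage your approach, you would need an independent proof of the lower bound $\dim F_x\ge n-1-\dim Y_i$; the natural such proof uses upper semicontinuity of exposed faces and closedness of the $Y_j$ to show nearby supporting hyperplanes must hit $Y_i$ — which is essentially the compactness argument the paper uses directly.
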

By general we mean in this context that the statement is true for all points in a dense (in the Zariski topology) semi-algebraic subset of $\ex_a(C)$.
\begin{proof}
By Straszewicz's Theorem (see Rockafellar \cite[Theorem 18.6]{RockafellarMR0274683}) and the Curve Selection Lemma from semi-algebraic geometry (see Bochnak-Coste-Roy \cite[Theorem 2.5.5]{BochnakMR1659509}), a general extreme point is exposed.
Let $y\in \ex(C)$ be an exposed extreme point contained in a unique irreducible component $Z$ of $\ex_a(C)$ and denote by $\ell_y$ an exposing linear functional. Let $Z_1,\ldots,Z_r$ be the irreducible components of $\ex_a(C)$ labelled such that $Z=Z_1$.
Since the sets $Z_i\cap\partial C\subset C$ are closed, they are compact. Now $\ell_y$ is strictly greater than $-1$ on $Z_i\cap\partial C$ for $i>1$ and therefore, there is a neighbourhood $U$ in $\partial C^o$ of $\ell_y$ such that every $\ell\in U$ is still strictly greater than $-1$ on $Z_i\cap \partial C$. The intersection of this neighbourhood with the semi-algebraic set $S$ of linear functionals exposing extreme points, which is open and dense in the euclidean topology by Proposition \ref{Prop:ExpExtPoi}, is non-empty and open in $\partial C^o$. Pick $\ell_0$ from this open set, then the extreme point $x$ exposed by $\ell_0$ has the claimed properties.
\end{proof}

\begin{Exm}
(a) Again, the above lemma has a simple geometric meaning in the case of polytopes: Every extreme point of the polytope is exposed exactly by the relative interior points of the facet of the dual polytope dual to it, again by Barvinok \cite[Theorem VI.1.3]{BarvinokMR1940576}.\\
(b) In Example \ref{Exm:ExPIncMax}(b), the boundary of the convex set $C$ consists of extreme points and a single $1$-dimensional face. So the only linear functional not exposing an extreme point of $C$ is the dual face to the edge of $C$, which is $(0,-1)\in \ex(C^o)$.
\end{Exm}

By homogenisation, we can prove the analogous version of the above lemma for closed and pointed convex cones.
\begin{Cor}\label{Cor:ExpExtRay}
Let $C\subset \R^{n+1}$ be a pointed closed semi-algebraic convex cone with non-empty interior. Let $F_0\subset C$ be an extreme ray of $C$ such that the line $[F_0]$ is a general point of $\P\exr_a(C)$. 
Let $Z$ be the irreducible component of $\P\exr_a(C)$ with $[F_0]\in Z$. Then there is a supporting hyperplane $\ell_0\in\partial C^\vee$ exposing $F_0$ and a semi-algebraic neighbourhood $U$ of $\ell_0$ in $\partial C^\vee$ such that every $\ell\in U$ supports $C$ in an extreme ray $F_\ell$ of $C$ contained in the regular locus of $Z$, i.e.~$[F_\ell]\in\Xreg[Z]$. \qed
\end{Cor}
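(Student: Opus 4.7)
The plan is to reduce to the preceding lemma by cutting $C$ with an affine hyperplane to obtain a compact base $B$, translating $B$ so that the origin lies in its (relative) interior, and then transporting data between this base and the cone $C$ via the standard projectivisation correspondence.

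Since $C$ is pointed with non-empty interior, $C^\vee$ also has non-empty interior, so I can choose $\ell_1 \in \topint(C^\vee)$. Then $\ell_1$ is strictly positive on $C\setminus\{0\}$, and $B := C \cap \{\ell_1 = 1\}$ is a compact semi-algebraic convex set in the affine hyperplane $H := \{\ell_1 = 1\}$. Picking an interior point $b_0 \in \topint(B)$ (relative to $H$) and translating gives $B_0 := B - b_0 \subset \ker(\ell_1) \cong \R^n$, a compact semi-algebraic convex body with $0$ in its interior, to which the preceding lemma applies.

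Next, I would set up the dictionary between $C$ and $B_0$. Sending an extreme ray $F$ of $C$ to $(F \cap H) - b_0$ is a bijection with the extreme points of $B_0$. Algebraically, the rational projection from $\P^n$ to $H$ given on the chart $\{\ell_1 \neq 0\}$ by $[v] \mapsto v/\ell_1(v)$, composed with translation by $-b_0$, restricts to a birational isomorphism between $\P\exr_a(C)$ and $\ex_a(B_0)$, identifying irreducible components as well as their smooth loci; let $Z'$ denote the component of $\ex_a(B_0)$ corresponding to $Z$. Similarly, those elements of $\partial C^\vee$ that do not vanish identically on $H$ restrict to affine supporting functionals of $B$, providing a semi-algebraic homeomorphism between neighbourhoods of the extreme-ray-exposing locus in $\partial C^\vee$ and the extreme-point-exposing locus in $\partial(B_0)^o$.

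Finally, I would apply the preceding lemma to $B_0$ at the extreme point $x_0 := (F_0 \cap H) - b_0$, which is generic in $Z'$ by the correspondence (in particular $x_0 \in \Xreg[Z']$). This produces an exposing $\ell'_0 \in \partial(B_0)^o$ and a semi-algebraic neighbourhood $U'$ of $\ell'_0$ such that every $\ell' \in U'$ exposes an extreme point $x_{\ell'}$ of $B_0$ on the component $Z'$. Using Proposition \ref{Prop:ExpExtPoi}, the assignment $\ell' \mapsto x_{\ell'}$ is continuous and semi-algebraic where defined, so after shrinking $U'$ we may assume $x_{\ell'} \in \Xreg[Z']$ throughout. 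Transporting $\ell'_0$ and $U'$ back through the dictionary yields the required $\ell_0 \in \partial C^\vee$ and semi-algebraic neighbourhood $U$ of $\ell_0$. The main obstacle is the bookkeeping in the dictionary: verifying that the birational projection really does identify irreducible components and their smooth loci, and that the restriction-extension between dual boundaries is a genuine semi-algebraic homeomorphism near the functionals of interest. Once this is pinned down, the corollary is literally the content of the preceding lemma applied to $B_0$.
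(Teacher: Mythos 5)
Your proof is correct and follows the route the paper itself indicates: the corollary is dispatched ``by homogenisation,'' and you carry out exactly that reduction, slicing $C$ by a level set of an interior functional $\ell_1 \in \topint(C^\vee)$, translating to put $0$ in the interior, applying the preceding lemma, and transporting the data back via the standard bijection between extreme rays of $C$ and extreme points of the base. Your additional step of shrinking $U'$ using continuity of $\ell' \mapsto x_{\ell'}$ (and the fact that $x_0$ is general, hence regular on $Z'$) correctly bridges the small gap between the lemma's conclusion (same irreducible component) and the corollary's slightly stronger one ($[F_\ell]\in\Xreg[Z]$).
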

The above notion of general now translates into the projective notion, i.e.~the statement is true for points in a dense semi-algebraic subset of the semi-algebraic set of extreme rays as a subset of $\P\exr_a(C)\subset\P^{n}$.

\section{The Algebraic Boundary of Convex Semi-algebraic Sets}\label{sec:AlgBound}
In this section, we consider a full-dimensional closed semi-algebraic convex cone $C\subset \R^{n+1}$ which is pointed, i.e.~it does not contain a line. The algebraic boundary of $C$ is an algebraic cone. In particular, it is the affine cone over its projectivisation, i.e.~$\partial_a C = \wh{\P\partial_a C}$.
The dual convex cone is the set
\[
C^\vee = \{\ell\in\DV[(\R^{n+1})] \colon \forall\;x\in C\; \ell(x)\geq 0\},
\]
i.e.~the set of all half spaces containing $C$. We write $\exr_a(C)$ for the Zariski closure of the union of all extreme rays of $C$ in $\A^{n+1}$. Again, this is an algebraic cone. This is the technically more convenient language for the algebraic duality theory. We will deduce the statements for convex bodies by homogenisation.

We now consider projective dual varieties: Given an algebraic variety $X\subset\P^n$, the dual variety $\DX\subset\DP$ is the Zariski closure of the set of all hyperplanes $[H]\in\DP$ such that $H$ contains the tangent space to $X$ at some regular point $x\in\Xreg$. For computational aspects of projective duality, we refer to Ranestad-Sturmfels \cite{RanSt} and Rostalski-Sturmfels \cite{RostalskiSturm}.
\begin{Prop}\label{Prop:DualAlgBound}
The dual variety to the algebraic boundary of $C$ is contained in the Zariski closure of the extreme rays of the dual convex cone, i.e.
\[
\DX[(\P\partial_a C)]\subset \P\exr_a(C^\vee)
\]
\end{Prop}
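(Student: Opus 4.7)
The plan is to reduce to a single irreducible component. Since the dual variety of a reducible variety is the union of the dual varieties of its irreducible components, it suffices to prove $\DX[Z]\subset\P\exr_a(C^\vee)$ for each irreducible component $Z$ of $\P\partial_a C$. By definition $\DX[Z]$ is the Zariski closure of the image of the Gauss map on $\Xreg[Z]$, sending $[x]$ to the class $[\T_{[x]}Z]\in\DP$ of its projective tangent hyperplane, so it is enough to exhibit a Zariski-dense subset $U\subset \Xreg[Z]$ such that $[\T_{[x]}Z]\in\P\exr_a(C^\vee)$ for every $[x]\in U$.

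I would choose $U$ to consist of the real smooth points of $Z$ which lie on $\partial C$ and avoid every other irreducible component of $\P\partial_a C$. Zariski density of $U$ in $Z$ is a consequence of the definition $\partial_a C=\overline{\partial C}$: since $Z$ is an irreducible component of $\P\partial_a C$, it must equal the Zariski closure of $Z\cap\partial C$; removing the proper (hence lower-dimensional) singular locus $\Xsing[Z]$ and the finitely many other components of $\P\partial_a C$ preserves Zariski density in $Z$. For $[x]\in U$ the algebraic boundary $\P\partial_a C$ coincides locally with $Z$ and is smooth at $[x]$, so $[x]\in\Xreg[(\P\partial_a C)]$.

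For such $[x]$, the convexity of $C$ forces $\partial_a C$ to lie locally in one closed halfspace bounded by $\T_{[x]}Z$, exactly as in the proof of Proposition \ref{Prop:ExpExtPoi}. Hence $\T_{[x]}Z$ is the unique supporting hyperplane of $C$ at $x$ and defines a functional $\ell_x\in\partial C^\vee$ with $\ell_x(x)=0$. Applying Corollary \ref{Cor:ExpExtPoi} to $C^\vee$ in place of $C$ (using $(C^\vee)^\vee=C$), the face $\{\ell\in C^\vee:\ell(x)=0\}$ of $C^\vee$ exposed by $x$ is an extreme ray, and by uniqueness of the supporting hyperplane this ray is generated by $\ell_x$. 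Therefore $[\ell_x]=[\T_{[x]}Z]\in\P\exr_a(C^\vee)$, and passing to the Zariski closure over $U$ yields $\DX[Z]\subset\P\exr_a(C^\vee)$.

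The main obstacle I anticipate is the density step in the second paragraph: one must separate the contributions of the various irreducible components to the Euclidean boundary $\partial C$ and confirm that the real smooth locus of $Z$ lying on $\partial C$ is Zariski dense in the possibly complex variety $Z$. Once this is in place, the convex-geometric content is neatly packaged in Corollary \ref{Cor:ExpExtPoi} applied to $C^\vee$, and the projective-geometric content is the standard description of the dual variety as the Zariski closure of the image of the Gauss map.
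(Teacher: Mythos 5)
Your proof is correct and follows essentially the same route as the paper: restrict to a Zariski-dense set of regular real boundary points on a fixed irreducible component, use convexity to identify the tangent hyperplane there with the unique supporting hyperplane, and conclude that the corresponding normal cone is an extreme ray of $C^\vee$. The only cosmetic difference is that you package the extreme-ray conclusion by invoking Corollary \ref{Cor:ExpExtPoi} applied to $C^\vee$, whereas the paper rederives that step inline by arguing as in the proof of Proposition \ref{Prop:ExpExtPoi}.
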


\begin{proof}
Let $Y\subset\P\partial_a C$ be an irreducible component of the algebraic boundary of $C$. Let $x\in \wh{Y}\cap\partial C$ be a general point and $H\subset \R^{n+1}$ be a supporting hyperplane to $C$ at $x$. We argue similarly to the proof of Proposition \ref{Prop:ExpExtPoi}: Since $C$ lies in one half-space defined by $H$, so does $\wh{Y}$ locally around $x$. Therefore, $H$ is the tangent hyperplane $\T_x\wh{Y}$. Now the tangent hyperplane to $\wh{Y}$ at $x$ is unique, because $\wh{Y}$ has codimension $1$. So the set of all supporting hyperplanes to $C$ at $x$ is an extreme ray of the dual convex cone. Since $\wh{Y}\cap C$ is Zariski dense in $\wh{Y}$, the hyperplanes tangent to $\wh{Y}$ at points $x\in \wh{Y}\cap C$ are dense in the dual variety to $Y$.
\end{proof}

\begin{Rem}\label{Rem:ExDual}
Let $Z\subset \exr_a(C)$ be an irreducible component. Then the dual variety to $\P Z\subset \P^n$ is a hypersurface in $\DP$, which follows from the biduality theorem in projective algebraic geometry Tevelev \cite[Theorem 1.12]{TevMR2113135}, because $\P Z$ cannot contain a dense subset of projective linear spaces of dimension $\geq 1$. Suppose $\P Z$ contained a dense subset of projective linear spaces of dimension $\geq 1$, then the set $Z\cap \exr(C)$, which is dense in $Z$, would contain a Zariski dense subset of an affine linear space of dimension at least $2$. This contradicts the fact that the set of extreme rays $\exr(C)$ does not contain any line segments other than those lying on the rays themselves.
\end{Rem}

In the language of cones, our first main theorem is the following.
\begin{Thm}\label{Thm:DualIrrComp}
Let $C\subset \R^{n+1}$ be a pointed closed semi-algebraic convex cone with non-empty interior. The dual variety to the locus of extreme rays of $C$ is contained in the algebraic boundary of the dual convex cone $C^\vee$, i.e.
\[
\DX[(\P \exr_a(C))]\subset \P \partial_a C^\vee.
\]
More precisely, the dual variety to every irreducible component of $\P \exr_a(C)$ is an irreducible component of $\P \partial_a C$.
\end{Thm}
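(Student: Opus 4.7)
The plan is to prove that for each irreducible component $Z$ of $\P\exr_a(C)$, the dual variety $\DX[Z]$ is an irreducible component of $\P\partial_a C^\vee$. By Remark \ref{Rem:ExDual}, $\DX[Z]$ is already an irreducible hypersurface in $\DP[n]$, so the target dimensions match and it suffices to exhibit $\DX[Z]\subset \P\partial_a C^\vee$; irreducibility plus the correct codimension in the hypersurface $\P\partial_a C^\vee$ will then force $\DX[Z]$ to be a full component.

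I would start by invoking Corollary \ref{Cor:ExpExtRay} to fix an extreme ray $F_0$ with $[F_0]$ a general smooth real point of $Z$, a supporting hyperplane $\ell_0\in\partial C^\vee$ exposing $F_0$, and an open semi-algebraic neighbourhood $U\subset \partial C^\vee$ of $\ell_0$ such that every $\ell\in U$ exposes an extreme ray $F_\ell$ with $[F_\ell]\in \Xreg[Z]$. The central claim is that $[\ell]\in \DX[Z]$ for all $\ell\in U$, equivalently that the hyperplane $\{\ell=0\}$ contains the tangent space $\T_{[F_\ell]}Z$. The argument mirrors the proof of Proposition \ref{Prop:DualAlgBound} but with the roles of boundary and extreme rays exchanged: $\{\ell=0\}$ contains the ray $F_\ell$ by construction, and $C\subset\{\ell\geq 0\}$. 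By Remark \ref{Rem:ExtPointsSemiAlg}, the semi-algebraic set of extreme rays of $C$ whose class lies in $Z$ has real dimension equal to $\dim_\C Z$, and therefore fills a euclidean-dense open subset of $Z(\R)$ near the smooth real point $[F_\ell]$; on this dense subset $\ell\geq 0$, because the points are extreme rays in $C$. By continuity $\ell\geq 0$ on all of $Z(\R)$ near $[F_\ell]$, and since $[F_\ell]$ is a smooth real point sitting on $\{\ell=0\}$, the tangent space must be contained in that hyperplane.

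With the tangency claim in hand, the projectivised image $V:=\{[\ell]:\ell\in U\}\subset \DP[n](\R)$ is a semi-algebraic set of real dimension $n-1$ contained in $\DX[Z]$. Since the Zariski closure of a real semi-algebraic set of real dimension $d$ is a complex variety of dimension at least $d$, and $\DX[Z]$ is irreducible of complex dimension $n-1$, we obtain $\overline{V}^{\mathrm{Zar}}=\DX[Z]$. On the other hand $V\subset \P\partial C^\vee$, so the Zariski closure is contained in $\P\partial_a C^\vee$, giving $\DX[Z]\subset \P\partial_a C^\vee$. The irreducible subvariety $\DX[Z]$ of dimension $n-1$ is therefore a full irreducible component of the hypersurface $\P\partial_a C^\vee$, completing the proof.

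The main obstacle is the tangency step. One must know that the smooth real point $[F_\ell]$ is surrounded in the euclidean topology on $Z(\R)$ by an open set of \emph{actual} extreme rays, not merely by limit points of them, so that the inclusion $C\subset\{\ell\geq 0\}$ transfers to a local inclusion $Z(\R)\subset\{\ell\geq 0\}$. The uniformity of the neighbourhood $U$ coming from Corollary \ref{Cor:ExpExtRay}, together with the equality of real and complex dimensions in Remark \ref{Rem:ExtPointsSemiAlg}, are precisely the ingredients that make this step go through.
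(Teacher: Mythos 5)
Your argument is essentially the paper's: both use Corollary \ref{Cor:ExpExtRay} to produce a full-dimensional semi-algebraic family $U$ of supporting functionals exposing extreme rays $F_\ell$ with $[F_\ell]\in\Xreg[(\P Z)]$, then claim $\ker\ell$ is tangent to $Z$ at $F_\ell$ because $Z(\R)$ lies locally in $C$ there, so that $\P U\subset \DX[(\P Z)]$ has full dimension and forces $\DX[(\P Z)]$ to be a component of the hypersurface $\P\partial_a C^\vee$ (via Remark \ref{Rem:ExDual}). The one place to be careful, which you flag yourself, is the ``therefore'' in your tangency step: equality of the semi-algebraic dimension of $\exr(C)\cap Z$ with $\dim_\C Z$ does \emph{not} by itself imply that extreme rays fill a euclidean neighbourhood of an arbitrary smooth real point of $Z$; it does at a Zariski-general one, and one then has to shrink $U$ (using that $\ell\mapsto F_\ell$ is continuous at $\ell_0$, since any subsequential limit $F$ of $F_{\ell_j}$ satisfies $\ell_0(F)=0$, $F\in C\cap Z$, hence $F=F_0$) to keep all $F_\ell$ inside that euclidean-open neighbourhood. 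The paper is equally terse at exactly this point, writing only ``because $\P Z$ is locally contained in $C$,'' so this is a shared ellipsis rather than a defect peculiar to your write-up.
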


\begin{proof}
Let $\P Z\subset \P\exr_a(C)$ be an irreducible component of the locus of extreme rays of $C$. By Corollary \ref{Cor:ExpExtRay}, a general extreme ray $[F_0]\in \P Z\cap (\P \exr(C))$ is exposed by $\ell_0\in\partial C^\vee$ and there is a semi-algebraic neighbourhood $U$ of $\ell_0$ in $\partial C^\vee$ such that every $\ell\in U$ exposes an extreme ray $F_\ell$ of $C$ such that $[F_\ell]\in\Xreg[(\P Z)]$. The hyperplane $\P\ker(\ell)$ is tangent to $\P Z$ at $[F_\ell]$ because $\P Z$ is locally contained in $C$; so $\P U$ is a semi-algebraic subset of $\DX[\P Z]$ of full dimension and the claim follows.
\end{proof}

In the Introduction, we gave an affine version of the preceding theorem that follows from it via homogenisation.
\begin{Cor}\label{Thm:DualIrrCompAffine}
Let $C\subset \R^n$ be a compact convex semi-algebraic set with $0\in\topint(C)$. Let $Z$ be an irreducible component of the Zariski closure of the set of extreme points of its dual convex body. Then the variety dual to $Z$ is an irreducible component of the algebraic boundary of $C$. More precisely, the dual variety to the projective closure $\ol{Z}$ of $Z$ with respect to the embedding $\A^n\to\DP$, $x\mapsto (1:x)$ is an irreducible component of the projective closure of $\partial_a C$ with respect to $\A^n\to\P^n$, $x\mapsto (1:x)$.
\end{Cor}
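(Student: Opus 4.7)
The plan is to deduce the corollary from Theorem~\ref{Thm:DualIrrComp} by homogenisation, with the roles of primal and dual swapped via bipolarity. Set $D:=C^o$, so $D$ is a compact convex semi-algebraic set with $0\in\topint(D)$ and $D^o=C$; I would apply Theorem~\ref{Thm:DualIrrComp} to the cone $\co(D)\subset\R^{n+1}$, which is pointed, closed, semi-algebraic, and full-dimensional. A direct computation from the definitions yields the bipolar-type identity $\co(D)^\vee=\co(D^o)=\co(C)$.

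The heart of the argument is translating between the projective objects in Theorem~\ref{Thm:DualIrrComp} and the affine objects in the corollary. The extreme rays of $\co(D)$ are the half-lines $\R_{\geq 0}\cdot(1,\ell)$ for $\ell\in\ex(D)$, so $\exr_a(\co(D))$ is the Zariski closure in $\A^{n+1}$ of the image of $\A^1\times\ex_a(D)\to\A^{n+1}$, $(\lambda,\ell)\mapsto(\lambda,\lambda\ell)$. A standard dimension count identifies this closure with the affine cone over the projective closure of $\ex_a(D)=\ex_a(C^o)$ under $\A^n\to\DP$, $\ell\mapsto(1:\ell)$; passing to projectivisations, $\P\exr_a(\co(D))$ is exactly that projective closure, and its irreducible components are the projective closures $\ol Z$ of the irreducible components $Z$ of $\ex_a(C^o)$. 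On the other side, Remark~\ref{Rem:ConeProj} identifies $\P\partial_a\co(C)$ with the projective closure of $\partial_a C\subset\A^n$ in $\P^n$ under $x\mapsto(1:x)$.

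With these identifications in hand, Theorem~\ref{Thm:DualIrrComp} applied to $\co(D)$ delivers the corollary at once: for any irreducible component $Z$ of $\ex_a(C^o)$, the projective closure $\ol Z$ is an irreducible component of $\P\exr_a(\co(D))$, so the dual variety $\DX[\ol Z]\subset\P^n$ is an irreducible component of $\P\partial_a(\co(D)^\vee)=\P\partial_a\co(C)$, i.e.~of the projective closure of $\partial_a C$. This is the ``more precisely'' assertion of the corollary. The unqualified affine statement then follows by restricting to the chart $\A^n\subset\P^n$: since $\partial_a C$ is a hypersurface in $\A^n$, its projective closure in $\P^n$ has irreducible components which are precisely the projective closures of the irreducible components of $\partial_a C$, and so $\DX[\ol Z]\cap\A^n$ is an irreducible component of $\partial_a C$.

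The main obstacle, and really the only substantive technical step, is the pair of identifications in the second paragraph: checking that Zariski closure commutes with the ``cone over height-$1$ section'' operation, both for the locus of extreme rays of $\co(D)$ and, via Remark~\ref{Rem:ConeProj}, for the algebraic boundary of $\co(C)$, so that irreducible components correspond bijectively to those of the relevant projective objects. Once this routine commutation is in place, the theorem yields the corollary directly.
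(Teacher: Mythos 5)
Your proposal is correct and follows the paper's own argument essentially verbatim: homogenise $C^o$ to the cone $\co(C^o)=\co(C)^\vee$, identify $\ol Z$ with an irreducible component of $\P\exr_a(\co(C^o))$, apply Theorem~\ref{Thm:DualIrrComp} to conclude $\DX[\ol Z]$ is an irreducible component of $\P\partial_a\co(C)$, and translate back to $\A^n$ via Remark~\ref{Rem:ConeProj}. The only difference is cosmetic—you spell out the bipolarity $\co(C^o)^\vee=\co(C)$ and the cone/projective-closure bookkeeping more explicitly than the paper does.
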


\begin{proof}
We homogenise the convex body and its dual convex body by embedding both at height $1$ to get convex cones $\co(C)=\{(\lambda,\lambda x)\colon \lambda\geq 0, x\in C\}\subset\R\times\R^n$ and $\co(C^o) = (\co(C))^\vee\subset\R\times\DV[(\R^n)]$. The projective closure $\ol{Z}$ of the irreducible component $Z\subset \ex_a(C^o)$ with respect to the embedding $\A^n\to\DP$, $x\mapsto (1:x)$ is an irreducible component of $\P \exr_a(\co(C)^\vee)$. By the above Theorem \ref{Thm:DualIrrComp}, the dual variety to $\ol{Z}$ is an irreducible component of $\P (\partial_a \co(C))$, which is the projective closure of an irreducible component of the algebraic boundary of $C$ with respect to the embedding $\A^n\to\P^n$, $x\mapsto (1:x)$.
\end{proof}

\begin{Cor}
Let $C\subset \R^{n+1}$ be a pointed closed semi-algebraic convex cone with non-empty interior. We have $\DX[(\P \partial_a C)] =\P \exr_a(C^\vee)$.\qed
\end{Cor}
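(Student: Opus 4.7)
The plan is to establish the two inclusions separately. The inclusion $\DX[(\P \partial_a C)] \subset \P \exr_a(C^\vee)$ is immediate from Proposition \ref{Prop:DualAlgBound} applied to $C$, so no additional work is needed for that direction.

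For the reverse inclusion $\P \exr_a(C^\vee) \subset \DX[(\P \partial_a C)]$, I would apply Theorem \ref{Thm:DualIrrComp} to the dual cone $C^\vee$ and then invoke projective biduality. First I would verify that $C^\vee$ itself satisfies the hypotheses of Theorem \ref{Thm:DualIrrComp}: closedness and convexity are automatic from the definition, pointedness of $C^\vee$ is equivalent to $C$ having non-empty interior, non-emptiness of the interior of $C^\vee$ is equivalent to $C$ being pointed, and semi-algebraicity of $C^\vee$ follows from quantifier elimination (Tarski--Seidenberg, cf.~Bochnak-Coste-Roy \cite[Proposition 2.2.4]{BochnakMR1659509}). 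The bipolar theorem gives $(C^\vee)^\vee = C$. Now for any irreducible component $W$ of $\P \exr_a(C^\vee)$, Theorem \ref{Thm:DualIrrComp} applied to $C^\vee$ tells us that $\DX[W]$ is an irreducible component of $\P \partial_a(C^\vee)^\vee = \P \partial_a C$. Applying the Biduality Theorem (Tevelev \cite[Theorem 1.12]{TevMR2113135}) to the irreducible projective variety $W$ yields $\DX[(\DX[W])] = W$, so $W \subset \DX[(\P \partial_a C)]$. Taking the union over all irreducible components of $\P \exr_a(C^\vee)$ (which is the Zariski closure of the union of its finitely many irreducible components) then gives the claim.

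The main obstacle is essentially bookkeeping: one must verify that $C^\vee$ inherits the hypotheses of Theorem \ref{Thm:DualIrrComp} and that the biduality theorem applies to each irreducible component. Both points are standard — the convex-duality statements are classical, and biduality over an algebraically closed field of characteristic zero is exactly the setting of Tevelev's theorem — so the corollary is really a formal consequence of Proposition \ref{Prop:DualAlgBound}, Theorem \ref{Thm:DualIrrComp}, and biduality.
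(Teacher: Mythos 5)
Your proposal is correct and matches the intended argument: the paper marks this corollary with \qed precisely because it is the combination of Proposition~\ref{Prop:DualAlgBound} (one inclusion), Theorem~\ref{Thm:DualIrrComp} applied to $C^\vee$, and componentwise biduality (the other inclusion), exactly as you lay out. Your verification that $C^\vee$ inherits the hypotheses of Theorem~\ref{Thm:DualIrrComp} and your careful note that biduality is applied to each irreducible component of $\P\exr_a(C^\vee)$ separately (rather than to the full, possibly reducible variety, which is exactly the pitfall the paper warns about in Remark~\ref{Rem:twoParabolas}) are both on point.
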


\begin{Rem}\label{Rem:twoParabolas}
It does not follow from the biduality theorems in both theories that $\DX[(\P \exr_a(C^\vee))]=\P \partial_a C$ simply because the biduality theorem in the algebraic context does not in general apply to this situation, since the varieties in question tend to be reducible. In fact, the mentioned equality does not hold in general, as the following example shows: 
Let $C\subset\R^2$ be the convex set defined by the inequalities $x^2+y^2-1\geq 0$ and $x\leq 3/5$, see Figure \ref{fig:CirclePlaneDuality}. The dual convex body is the convex hull of the set $\{(x,y)\in\R^2\colon x^2+y^2-1\geq 0, x\geq -3/5\}$ and the point $(-5/3,0)$ (it cannot be defined by simultaneous polynomial inequalities, i.e.~it is not a basic closed semi-algebraic set). Its algebraic boundary has three components, namely the circle and the two lines $y=3/4x+5/4$ and $y = -3/4 x-5/4$. The set of extreme points of $C$ is $\{(x,y)\colon x^2+y^2-1=0,x\leq 3/5\}$. So $\ex_a(C) = \V(x^2+y^2-1)$ and $\V(x^2+y^2-1)^\ast = \V(x^2+y^2-1)\subsetneq \partial_a C^o$.
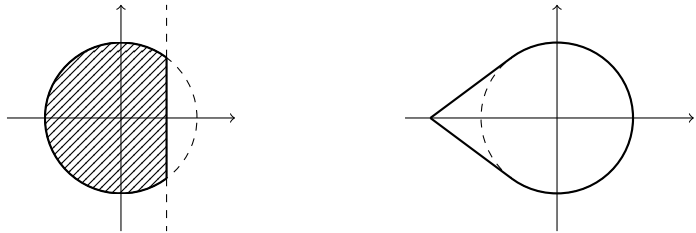
\begin{figure}[h]
\begin{center}
\begin{minipage}[]{0.4\textwidth}
\begin{center}
\begin{tikzpicture}
\draw[->] (-5.5,0) -- (-2.5,0);
\draw[->] (-4,-1.5) -- (-4,1.5);
\draw[thick] (-3.4,-0.8) -- (-3.4,0.8);
\draw[dashed] (-3.4,0.8) -- (-3.4,1.5);
\draw[dashed] (-3.4,-0.8) -- (-3.4,-1.5);
\draw[dashed] (-4,0) circle(1);
\clip (-3.4,-1) rectangle (-5,1);
\filldraw[pattern=north east lines] (-4,0) circle(1);
\draw[thick] (-4,0) circle(1);
\end{tikzpicture}
\end{center}
\end{minipage}
\begin{minipage}[]{0.4\textwidth}
\begin{center}
\begin{tikzpicture}
\clip (-2,2) rectangle (2,-2);
\draw[->] (-2,0) -- (1.8,0);
\draw[->] (0,-1.5) -- (0,1.5);
\draw[thick] (-1.666666,0) -- (-0.6,0.8);
\draw[thick] (-1.666666,0) -- (-0.6,-0.8);
\draw[dashed] (0,0) circle(1);
\clip (-0.6,-2) rectangle (2,2);
\draw[thick] (0,0) circle(1);
\end{tikzpicture}
\end{center}
\end{minipage}
\end{center}
\caption{A circle cut by a halfspace and its dual convex body.}
\label{fig:CirclePlaneDuality}
\end{figure}
\end{Rem}

The following statement gives a complete semi-algebraic characterisation of the irreducible subvarieties $Y\subset\exr_a(C)$ with the property that $\DX[Y]$ is an irreducible component of the algebraic boundary of $C^\vee$.
\begin{Thm}\label{Thm:DualCond}
Let $C\subset \R^{n+1}$ be a pointed closed semi-algebraic convex cone. Let $Z$ be an irreducible algebraic cone contained in $\exr_a(C)$ and suppose $Z\cap \exr(C)$ is Zariski dense in $Z$. Then the dual variety to $\P Z$ is an irreducible component of $\P \partial_a C^\vee$ if and only if the dimension of the normal cone to a general point $x\in Z\cap \exr(C)$ is equal to the codimension of $Z$, i.e.
\[
\dim(Z)+\dim(N_C(\R_+x)) = n+1.
\]
Conversely, if $Y$ is an irreducible component of the algebraic boundary of $C^\vee$, then the dual variety to $\P Y$ is an irreducible subvariety of $\P \exr_a(C)$, the set $\DX[(\P Y)]\cap \exr(C)$ is Zariski dense in $\DX[(\P Y)]$ and the above condition on the normal cone is satisfied at a general extreme ray for the affine cone over $\DX[(\P Y)]$.
\end{Thm}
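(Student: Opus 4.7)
The plan is to prove both directions of the biconditional by comparing the \emph{sweep of normal cones}
\[
S := \bigcup_{[x] \in \P Z \cap \P\exr(C)} N_C(\R_+ x) \subset C^\vee
\]
with the affine cone $\wh{\DX[\P Z]}$. My first step is to establish the inclusion $N_C(\R_+ x) \subset (\T_x Z)^\perp$ at a general smooth extreme ray $x \in Z \cap \exr(C)$. Since $Z \cap \exr(C)$ is Zariski dense in $Z$, its semi-algebraic dimension equals $\dim Z$ by Remark~\ref{Rem:ExtPointsSemiAlg}, so a general such $x$ has a Euclidean neighborhood in $Z(\R)$ contained in $C$; the argument of Proposition~\ref{Prop:DualAlgBound} then forces every $\ell \in N_C(\R_+ x)$ to annihilate $\T_x Z$, giving the bound $\dim N_C(\R_+ x) \leq \codim Z$. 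Next, the incidence $\I := \{([x], \ell) : [x] \in \P Z \cap \P\exr(C),\; \ell \in N_C(\R_+ x)\}$ fibers over the $(\dim Z - 1)$-dimensional projective parameter space with fibers of dimension $\dim N_C(\R_+ x)$, and its projection to $\DV[\R^{n+1}]$ is generically one-to-one because distinct extreme rays of a pointed cone expose distinct faces of $C^\vee$; hence $\dim S = (\dim Z - 1) + \dim N_C(\R_+ x)$, and the first inclusion combined with the conormal description of the dual variety yields $S \subset \wh{\DX[\P Z]} \cap \partial C^\vee$.

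Both directions of the biconditional now follow from this dimension count. If $\dim N_C(\R_+ x) = \codim Z$, then $\dim S = n$, so the Zariski closure of $S$ is an $n$-dimensional subvariety contained in the irreducible cone $\wh{\DX[\P Z]}$ of dimension at most $n$, forcing equality of the two; since $S \subset \partial C^\vee$, the variety $\wh{\DX[\P Z]}$ lies in the hypersurface $\partial_a C^\vee$ and is therefore an irreducible component. Conversely, if $\wh{\DX[\P Z]}$ is an irreducible component of $\partial_a C^\vee$, then $\wh{\DX[\P Z]} \cap \partial C^\vee$ has semi-algebraic dimension $n$. For a generic $\ell_0$ in this locus, Corollary~\ref{Cor:ExpExtPoi} supplies a unique exposed extreme ray $\R_+ x_0$ of $C$ whose evaluation $\ev_{x_0}$ is the tangent hyperplane to $\wh{\DX[\P Z]}$ at $\ell_0$; via the conormal correspondence, $\ev_{x_0}$ is the conormal image of a smooth point $y_0 \in Z$, and $\ell_0(y_0) = 0$ together with $y_0 \in C$ forces $y_0 \in \R_+ x_0$, so $[x_0] \in \P Z$ and $\ell_0 \in S$. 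Hence $S$ is dense in $\wh{\DX[\P Z]} \cap \partial C^\vee$, giving $\dim S = n$ and the desired normal cone equality.

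For the converse statement of the theorem, Proposition~\ref{Prop:DualAlgBound} applied to $C^\vee$ gives $\DX[\P Y] \subset \P \exr_a(C^{\vee\vee}) = \P \exr_a(C)$, with irreducibility inherited from $Y$. Zariski density of $\DX[\P Y] \cap \exr(C)$ in $\DX[\P Y]$ is built into the proof of Proposition~\ref{Prop:DualAlgBound}: the Gauss map sends a Zariski dense subset of real supporting points $\ell \in \wh{Y} \cap C^\vee$ to a Zariski dense subset of $\DX[\P Y]$, and each such image is the extreme ray of $C$ exposed by $\ell$, hence lies in $\exr(C)$. The normal cone condition is then obtained by applying the main biconditional with $Z := \wh{\DX[\P Y]}$, using biduality (Remark~\ref{Rem:ExDual}) to identify $\DX[\P Z]$ with $Y$, which is a component of $\P \partial_a C^\vee$ by hypothesis. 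The main obstacle throughout is the identification $[x_0] \in \P Z$ in the forward half of the biconditional: one must coordinate the convex-geometric exposed ray provided by Corollary~\ref{Cor:ExpExtPoi} with the projective-algebraic conormal point $y_0 \in Z$, using that the unique tangent hyperplane to $\wh{\DX[\P Z]}$ at $\ell_0$ simultaneously supports $C^\vee$ and arises as the conormal image of $y_0$.
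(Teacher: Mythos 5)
Your overall strategy mirrors the paper's: set up an incidence variety between extreme rays in $Z$ and the supporting functionals in their normal cones, count dimensions, and match against the dual hypersurface. (The paper works with the affine biconical set $\Sigma = \{(x,\ell): x\in\Xreg[Z]\cap\exr(C),\ \ell\in C^\vee,\ \ell(x)=0\}$ inside the conormal variety $\CN[\P Z]$ and compares $\dim\Sigma$ against $\dim\CN[\P Z]+2 = n+1$; you work instead with the image $S=\pi_2(\Sigma)$ and compare $\dim S$ against $\dim\wh{\DX[\P Z]} = n$. These are the same computation in different coordinates.)

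There is, however, a genuine gap in your dimension count. You assert that $\pi_2\colon\I\to S$ is generically one-to-one ``because distinct extreme rays of a pointed cone expose distinct faces of $C^\vee$,'' and conclude $\dim S = (\dim Z - 1) + \dim N_C(\R_+x)$. This justification fails on two counts. First, the quoted claim is false: take $C$ to be the cone over a stadium-shaped body $B = \conv(D_{-1}\cup D_{1})\subset\R^2$, where $D_{\pm1}$ are unit disks centred at $(\pm1,0)$. The extreme points $(-1,1)$ and $(1,1)$ are both smooth boundary points of $B$ with the same outer normal $(0,1)$, so the two corresponding extreme rays of $C$ have identical (one-dimensional) normal cones. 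Second, even if the map $x\mapsto N_C(\R_+x)$ were injective on extreme rays, that would not give generic injectivity of $\pi_2$: a fixed functional $\ell$ can lie in the normal cones of many distinct extreme rays without any two of those normal cones coinciding. What you actually need is that a \emph{general} $\ell$ in $S$ belongs to the normal cone of only finitely many rays of $\P Z$. The paper avoids this entirely by observing that the biprojectivization of $\Sigma$ sits inside the irreducible variety $\CN[\P Z]$ of dimension $n-1$, so $\dim\Sigma = n+1$ already forces $\Sigma$ to be Zariski dense in $\CN[\P Z]$ and hence $\pi_2(\Sigma)$ dense in $\DX[\P Z]$; no finiteness of fibers is needed for the forward direction, and the converse direction uses only the easy inequality $\dim\Sigma\le n+1$. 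You could repair your write-up the same way by projectivizing the $\ell$-coordinate of $\I$ and appealing to irreducibility of $\CN[\P Z]$, together with the fact (from Remark~\ref{Rem:ExDual} and biduality) that $\CN[\P Z]\to\DX[\P Z]$ is generically finite. A smaller issue: in the backward direction you invoke ``$y_0\in C$'' to force $y_0\in\R_+x_0$, but nothing guarantees that the conormal partner $y_0\in Z$ of $\ell_0$ lies in $C$ rather than merely in the Zariski closure $\exr_a(C)$. The identification $[x_0]=[y_0]$ should instead be drawn directly from the uniqueness of the tangent hyperplane at a general point of the hypersurface $\DX[\P Z]$: it equals $[x_0]^\perp$ by Corollary~\ref{Cor:ExpExtPoi} and equals $[y_0]^\perp$ by biduality, so $[x_0]=[y_0]\in\P Z$.
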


To be clear, the normal cone is $N_C(\R_+x) = \{\ell\in\DV[(\R^{n+1})] \colon \forall\; y\in C\setminus \R_+x\; \ell(y) \geq \ell(x)= 0\}$.
\begin{proof}
Consider the semi-algebraic set $\Sigma\subset\partial C\times\partial C^\vee\subset \R^{n+1}\times \DV[(\R^{n+1})]$ defined as
\[
\Sigma=\{(x,\ell)\in \R^{n+1}\times \DV[(\R^{n+1})]\colon x\in \Xreg[Z]\cap \exr(C), \ell\in C^\vee, \ell(x)=0\}
\]
This is the set of all tuples $(x,\ell)$, where $x$ spans an extreme ray of $C$ and is a regular point of $Z$ and $\ell$ is a supporting hyperplane to $C$ at $x$, i.e.~the fibre of the projection $\pi_1$ onto the first factor over a point $x$ is the normal cone $N_C(\R_+ x)$. Since a supporting hyperplane to $C$ at $x$ is tangent to $Z$ at $x$, this bihomogeneous semi-algebraic incidence correspondence is naturally contained in the conormal variety $\CN[\P Z]\subset \P^n\times\DP$ of the projectivisation of $Z$.
Now the image $\pi_2(\Sigma)$ is Zariski dense in $\DX[\P Z]$ if and only if $\DX[\P Z]$ is an irreducible component of the projectivisation of the algebraic boundary of $C^\vee$. Indeed, $\pi_2(\Sigma)\subset \DX[\P Z]\cap \P \partial C^\vee$ and so if it is dense in $\DX[\P Z]$, we immediately get that $\DX[\P Z]\subset\P \partial_a C^\vee$ is an irreducible component, because $\DX[\P Z]$ is a hypersurface (cf.~Remark \ref{Rem:ExDual}(b)). Conversely, we have seen in the proof of the above proposition that if $\DX[\P Z]\subset\P \partial_a C^\vee$ is an irreducible component, the unique tangent hyperplane to a general point of $\DX[\P Z]\cap \P \partial C^\vee$ spans an extreme ray of $C$, i.e.~a general point of $\DX[\P Z]\cap \P\partial C^\vee$ is contained in $\pi_2(\Sigma)$.

This says, that $\Sigma$ is dense in $\CN[\P Z]$, i.e.~$\dim(\Sigma)=\dim(\CN[\P Z])+2=n+1$ if and only if $\DX[\P Z]$ is an irreducible component of $\P \partial_a C^\vee$.

On the other hand, counting dimensions of $\Sigma$ as the sum of the dimensions of $Z$ and the dimension of the fibre over a general point in $\Xreg[Z]\cap\exr(C)$, we see that $\dim(\Sigma)=n+1$ if and only if the claimed equality of dimensions
\[
\dim(Z)+\dim(N_C(\R_+x)) = n+1
\]
holds.
The second part of the statement follows from the first by Proposition \ref{Prop:DualAlgBound}.
\end{proof}

\begin{Rem}
We want to compare this theorem to the result of Ranestad and Sturmfels in \cite{RanSt}: They consider the convex hull of a smooth algebraic variety $X\subset\P^n$ and make the technical assumption that only finitely many hyperplanes are tangent to the variety $X$ in infinitely many points, which is needed for a dimension count in the proof. We get rid of this technical assumption in the above theorem. The assumption that the extreme rays are Zariski dense in the variety $Z$ in question, compares best to the Ranestad-Sturmfels assumption. It is semi-algebraic in nature.
\end{Rem}

The corresponding affine statement to Theorem \ref{Thm:DualCond} is the following. We take projective closures with respect to the same embeddings as in the affine version Corollary \ref{Thm:DualIrrCompAffine} of Theorem \ref{Thm:DualIrrComp} above.
\begin{Cor}\label{Thm:DualCondAffine}
Let $C\subset \R^n$ be a compact convex semi-algebraic set with $0\in\topint(C)$. Let $Z$ be an irreducible subvariety of $\ex_a(C)$ and suppose $Z\cap \ex(C)$ is dense in $Z$. Then the dual variety to $\ol{Z}$ is an irreducible component of $\ol{\partial_a C^o}$ if and only if
\[
\dim(Z) + \dim(N_C(\{x\})) = n
\]
for a general extreme point $x\in Z\cap \ex(C)$.
Conversely, if $Y$ is an irreducible component of $\partial_a C^o$, then the dual variety to $\ol{Y}$ is an irreducible subvariety of $\ol{\ex_a(C)}$, the set $\DX[\ol{Y}]\cap \ex(C)$ is dense in $\DX[\ol{Y}]$ and the condition on the normal cone is satisfied at a general extreme point.
\end{Cor}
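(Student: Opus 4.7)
My plan is to deduce this corollary from Theorem \ref{Thm:DualCond} by homogenisation, exactly in the spirit of how Corollary \ref{Thm:DualIrrCompAffine} was deduced from Theorem \ref{Thm:DualIrrComp}. I form the cone $\co(C) = \{(\lambda, \lambda x) : \lambda \geq 0, x \in C\} \subset \R \times \R^n$ and its dual $\co(C)^\vee = \co(C^o)$. Since $(1,x)$ spans an extreme ray of $\co(C)$ exactly when $x$ is an extreme point of $C$, the projective closure $\ol{Z}$ of the given irreducible subvariety $Z \subset \ex_a(C)$ under the embedding $\A^n \hookrightarrow \P^n$, $x \mapsto (1:x)$, corresponds to an irreducible component $\P\wh{\ol{Z}}$ of $\P\exr_a(\co(C))$ along which the extreme rays are Zariski dense. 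A symmetric statement identifies projective closures of irreducible components of $\partial_a C^o$ with irreducible components of $\P \partial_a \co(C)^\vee$.

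The core computation is to translate the normal cone condition. For an extreme point $x \in C$ and the corresponding extreme ray $\R_+(1,x) \subset \co(C)$, I expect
\[
N_{\co(C)}(\R_+(1,x)) = \{(-\ell(x), \ell) : \ell \in N_C(\{x\})\},
\]
because the supporting hyperplanes to $\co(C)$ through $(1,x)$ are exactly the functionals $(\mu,\ell)$ with $\mu = -\ell(x)$ and $\ell(y) \geq \ell(x)$ for every $y \in C$. In particular, $\dim N_{\co(C)}(\R_+(1,x)) = \dim N_C(\{x\})$, while $\dim \wh{\ol{Z}} = \dim Z + 1$.

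Plugging into the cone criterion of Theorem \ref{Thm:DualCond} applied in $\R^{n+1}$ gives
\[
\dim \wh{\ol{Z}} + \dim N_{\co(C)}(\R_+(1,x)) = n+1
\quad \Longleftrightarrow \quad
\dim Z + \dim N_C(\{x\}) = n,
\]
which is precisely the stated condition. The conclusion there that $\DX[\P\wh{\ol{Z}}]$ is an irreducible component of $\P\partial_a \co(C)^\vee$ translates, via the same correspondence between projective closures and the compatible dual embedding $\A^n \to \DP$, $x\mapsto (1:x)$, into the statement that $\DX[\ol{Z}]$ is an irreducible component of $\ol{\partial_a C^o}$. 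The converse direction is obtained by running exactly the same dictionary backwards from the converse half of Theorem \ref{Thm:DualCond}, using Proposition \ref{Prop:DualAlgBound} to ensure that the dual of an irreducible component of $\partial_a C^o$ lands inside the closure of extreme points.

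The only step that requires care — and which I expect to be the mildly fiddly part — is checking that the identification of projective closures with projectivisations of the homogenised cones is compatible with projective duality on both sides, so that $\DX[\P\wh{\ol{Z}}]$ really is the affine cone over $\DX[\ol{Z}]$ under the chosen embeddings; everything else is a dictionary translation and the dimension bookkeeping above.
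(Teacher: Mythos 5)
Your proposal matches the paper's proof exactly: the paper also deduces this corollary from Theorem~\ref{Thm:DualCond} by homogenising to $\co(C)$, and its one-line justification ("Note that the dimension of the normal cone does not change when homogenising") is precisely the observation you verify explicitly via $N_{\co(C)}(\R_+(1,x)) = \{(-\ell(x), \ell) : \ell \in N_C(\{x\})\}$. You simply spell out the dictionary and the dimension bookkeeping that the paper leaves implicit.
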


\begin{proof}
Again, the proof is simply by homogenising as above. Note that the dimension of the normal cone does not change when homogenising.
\end{proof}

In the following affine examples we will drop the technical precision of taking projective closures and talk about the dual variety to an affine variety to make them more readable.
\begin{Exm}
Let $C=\{x\in\R^n\colon g_1(x)\geq 0,\ldots,g_r(x)\geq 0\}\subset\R^n$ be a basic closed semi-algebraic convex set with non-empty interior defined by $g_1,\ldots,g_r\in\R[x_1,\ldots,x_n]$. Then the algebraic boundary $\partial_a C$ is contained in the variety $\V(g_1)\cup\ldots\cup\V(g_r) = \V(p_1)\cup\ldots\V(p_s)$, where $p_1,\ldots,p_s$ are the irreducible factors of the polynomials $g_1,\ldots,g_r$. The irreducible hypersurface $\V(p_i)$ is an irreducible component of $\partial_a C$ if and only if $\V(p_i)\cap \partial C$ is a semi-algebraic set of codimension $1$. By the above Corollary \ref{Thm:DualCondAffine}, we can equivalently check the following conditions on the dual varieties $X_i$ to the projective closure $\ol{\V(p_i)}$:
\begin{compactitem}
\item The extreme points of the dual convex set are dense in $X_i$ via $\R^n\to\DP$, $x\mapsto (1:x)$.
\item A general extreme point of the dual convex set in $X_i$ exposes a face of $C$ of dimension $\codim(X_i)-1$.
\end{compactitem}
We consider the convex set shown in Figure \ref{fig:Schleife}, whose algebraic boundary is the cubic curve $X = \V(y^2-(x+1)(x-1)^2)$, with different descriptions as a basic closed semi-algebraic set.
\begin{figure}[h]
\begin{center}
\begin{minipage}[]{0.4\textwidth}
\begin{center}
\begin{tikzpicture}
\filldraw[black!30!white,domain=-1.4142:1.4142,variable=\t,smooth] plot (\t*\t-1,2*\t-\t*\t*\t);
\draw[black,variable=\t,domain=-1.65:1.65] plot (\t*\t-1,2*\t-\t*\t*\t);
\end{tikzpicture}
\end{center}
\end{minipage}
\begin{minipage}[]{0.4\textwidth}
\begin{center}
\includegraphics[scale =0.6]{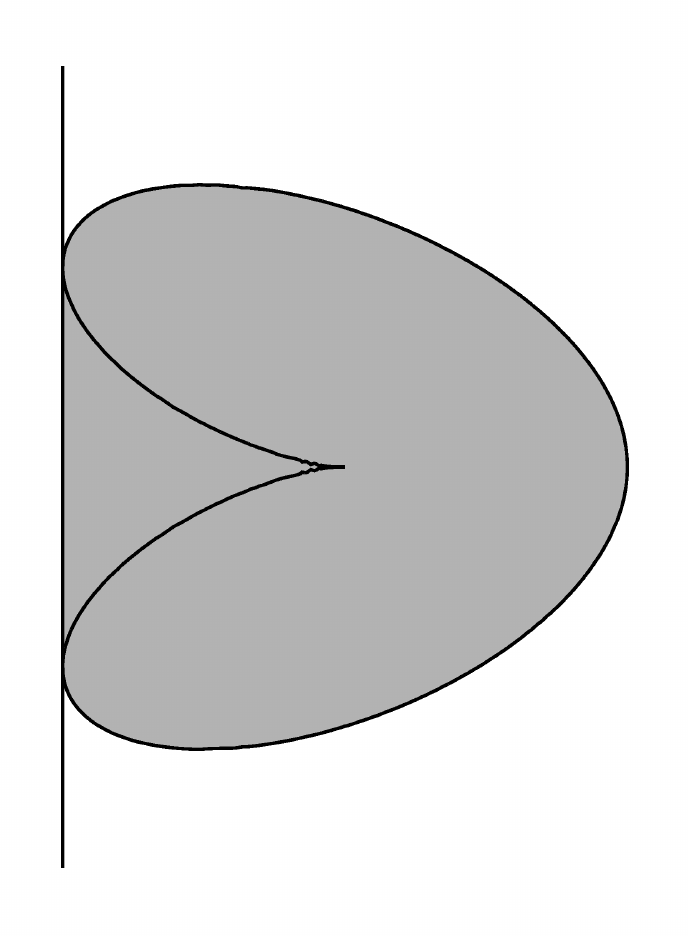}
\end{center}
\end{minipage}
\end{center}
\caption{A basic closed semi-algebraic set in the plane on the left and its dual convex set on the right.}
\label{fig:Schleife}
\end{figure}
The dual convex body is the convex hull of a quartic curve. Its algebraic boundary is
\[
\V(4x^4+32y^4+13x^2y^2-4x^3+18xy^2-27y^2 )\cup\V(x+1).
\]
Here, the line $\V(x+1)$ is a bitangent to the quartic and the dual variety of the node $(1,0)$ of the cubic and the quartic is the dual curve to the cubic.
We define $C$ using the cubic inequality and additionally either one linear inequality or the two tangents to the branches of $X$ in $(1,0)$
\begin{eqnarray*}
C & = & \{(x,y)\in\R^2\colon y^2-(x+1)(x-1)^2\leq 0, x\leq 1\} \\
  & = & \{(x,y)\in\R^2\colon y^2-(x+1)(x-1)^2\leq 0, y\geq \sqrt{2}(x-1),y\leq -\sqrt{2}(x-1)\},
\end{eqnarray*}
and we see both conditions in action. First, the dual variety to the affine line $x=1$ is $(-1,0)$, which is not an extreme point of $C^o$. The first condition mentioned above shows, that the line $\V(x-1)$ corresponding to the second inequality in the first description is not an irreducible component of $\partial_a C$.
 In the second description, the dual variety to the affine line $y = \sqrt{2}(x-1)$ is the point $P = (-1,\frac{1}{\sqrt{2}})$, which is an extreme point of $C^o$. The normal cone $N_{C^o}(\{P\})$ is $1$-dimensional, because the supporting hyperplane is uniquely determined - it is the bitangent $\V(x+1)$ to the quartic. So by the second condition above, the line $\V(y-\sqrt{2}(x-1))$ is not an irreducible component of $\partial_a C$.
\end{Exm}

\begin{Cor}\label{Cor:AlgBoundLocExtPoi}[to Corollary \ref{Thm:DualCondAffine}]
Let $C\subset \R^{n}$ be a compact semi-algebraic set with $0\in\topint(C)$.
Let $Y\subset \partial_a C^o$ be an irreducible component such that $\DX[\ol{Y}]\subset \ol{\ex_a(C)}$ is not an irreducible component. If $\DX[\ol{Y}]$ is contained in a bigger irreducible subvariety $Z\subset \ol{\ex_a(C)}$ such that $Z\cap\ex(C)$ is dense in $Z$, then
\begin{compactitem}
\item $\DX[\ol{Y}]\subset \Xsing[Z]$ or 
\item $\DX[\ol{Y}]$ is contained in the algebraic boundary of the semi-algebraic subset $\ex(C) \cap Z$ of $Z$.
\end{compactitem}
\end{Cor}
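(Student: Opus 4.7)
My plan is to argue by contradiction: suppose that $\DX[\ol{Y}]$ is contained neither in $\Xsing[Z]$ nor in the algebraic boundary of $\ex(C)\cap Z$ inside $Z$, and derive $\ol{Y}\subset Z^\ast$. Combined with the biduality theorem of Tevelev, this will force $Z=(Z^\ast)^\ast=\DX[\ol{Y}]$, contradicting the strict inclusion $\DX[\ol{Y}]\subsetneq Z$.

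First I would choose a general extreme point $x_0\in\DX[\ol{Y}]\cap\ex(C)$. By the converse part of Corollary \ref{Thm:DualCondAffine}, the set $\DX[\ol{Y}]\cap\ex(C)$ is Zariski dense in $\DX[\ol{Y}]$, so the two contradiction hypotheses let me simultaneously arrange $x_0\in\Xreg[Z]$ and $x_0\notin\partial_a(\ex(C)\cap Z)$; Straszewicz's theorem then lets me assume that $x_0$ is exposed by some $\ell_0\in Y\cap\Xreg[\partial_a C^o]$. The second contradiction hypothesis supplies a Zariski neighbourhood $V$ of $x_0$ in $Z$ whose real points contain no euclidean boundary point of $\ex(C)\cap Z$; the connected component $U$ of $V(\R)$ through $x_0$ is then a semi-algebraic euclidean neighbourhood of $x_0$ in $Z(\R)$ entirely contained in $\ex(C)$.

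The technical core of the argument is the semi-algebraic incidence
\[
\Sigma_U=\{(y,\ell)\in U\times C^o:\ell(y)=-1\},
\]
and I want two facts about its projection onto the second factor: (a) the Zariski closure of $\pi_2(\Sigma_U)$ contains $\ol{Y}$, and (b) $\pi_2(\Sigma_U)$ is contained in the dual variety $Z^\ast$ of $\ol{Z}$. For (a) I combine the lemma preceding Corollary \ref{Cor:ExpExtRay} with the Gauss-map picture of projective duality: every $\ell\in Y$ close enough to $\ell_0$ is a smooth point of $\partial_a C^o$ and exposes an extreme point $x_\ell$, and the gradient interpretation of duality at such a smooth point forces $x_\ell\in\DX[\ol{Y}]$. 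Continuity keeps $x_\ell$ near $x_0$ and hence inside $\DX[\ol{Y}]\cap U\subset U$, so $(x_\ell,\ell)\in\Sigma_U$ for all $\ell$ in a euclidean open subset of $Y$, and the Zariski closure of $\pi_2(\Sigma_U)$ therefore contains the irreducible component $\ol{Y}$. For (b) I observe that for any $(y,\ell)\in\Sigma_U$ the inclusion $U\subset\ex(C)\subset C$ makes $\ell|_U$ attain a local minimum at $y$; since $y\in\Xreg[Z]$ this critical-point condition forces $\ell$ to annihilate $T_yZ$, so the projective hyperplane represented by $\ell$ is tangent to $\ol{Z}$ at $y$ and thus defines a point of $Z^\ast$.

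Combining (a) and (b) gives $\ol{Y}\subset Z^\ast$. Now $\ol{Y}$ is an irreducible hypersurface in $(\P^n)^\ast$, whereas $Z^\ast$ is irreducible (as the image of the irreducible conormal variety $\CN[\ol{Z}]$) of dimension at most $n-1$; comparing dimensions therefore forces $Z^\ast=\ol{Y}$, and biduality \cite[Theorem~1.12]{TevMR2113135} then yields $Z=(Z^\ast)^\ast=(\ol{Y})^\ast=\DX[\ol{Y}]$, contradicting $\DX[\ol{Y}]\subsetneq Z$. The delicate step is (a): I must verify that the extreme points $x_\ell$ attached to nearby exposing hyperplanes really stay on $\DX[\ol{Y}]$ and do not drift into a larger component of $\ol{\ex_a(C)}$ merely because $\DX[\ol{Y}]\subsetneq Z$; this is exactly what the identification of the affine exposing correspondence with the projective Gauss map of $\ol{Y}$ at a smooth point delivers, after which continuity places $x_\ell$ inside the $Z$-neighbourhood $U$.
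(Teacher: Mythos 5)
Your proposal is correct in broad outline but takes a genuinely different route from the paper's, which is much shorter and direct. The paper does not argue by contradiction via biduality; instead it combines the same basic tangency observation with the quantitative normal-cone condition of Corollary~\ref{Thm:DualCondAffine}. Concretely: for a general $x\in\DX[\ol{Y}]\cap\ex(C)$ that happens to be a smooth point of $Z$ and an interior point of $\ex(C)\cap Z$ in $Z$, every supporting functional $\ell$ at $x$ must be tangent to $Z$ at $(1:x)$, so $\dim N_C(\{x\})\le\codim(Z)$; but Corollary~\ref{Thm:DualCondAffine} forces $\dim(\DX[\ol{Y}])+\dim N_C(\{x\})=n$, and since $\dim(\DX[\ol{Y}])<\dim Z$ this is impossible. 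Because $\DX[\ol{Y}]\cap\ex(C)$ is Zariski dense, irreducibility of $\DX[\ol{Y}]$ then yields the stated dichotomy in one step. Your argument instead rederives the dual variety structure from scratch: you build the incidence $\Sigma_U$, use the Gauss map to show $\ol{Y}\subset Z^\ast$, and invoke biduality to force $Z=\DX[\ol{Y}]$. That is a valid alternative, and it bypasses the dimension equation; the price is that you must manage the simultaneous choice of $x_0$, $\ell_0$, and the neighbourhood $U$, and you need irreducibility of $Z^\ast$, none of which the paper has to touch.

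One step that is stated too loosely is the claim that Straszewicz's theorem lets you take the exposing functional $\ell_0$ to lie in $Y\cap\Xreg[\partial_a C^o]$. Straszewicz only gives density of exposed points; it says nothing about which irreducible component of $\partial_a C^o$ the exposing hyperplane belongs to, and an exposed $x_0\in\DX[\ol{Y}]$ could a priori be exposed by $\ell_0$ lying only on a different component $Y'$, or at a singular point of $\partial_a C^o$. The clean fix is to reverse the order of choice and work inside the conormal variety: start from $\ell_0$ general in $Y\cap\partial C^o\cap\Xreg[(\partial_a C^o)]$ (a dense semi-algebraic set by Proposition~\ref{Prop:ExpExtPoi}), let $x_0$ be its Gauss image, and use the dominance of the Gauss map of $\ol{Y}$ onto $\DX[\ol{Y}]$ together with the density from the converse part of Corollary~\ref{Thm:DualCondAffine} to arrange $x_0\in\Xreg[Z]\setminus\partial_a(\ex(C)\cap Z)$. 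With that adjustment the rest of your argument — the neighbourhood $U\subset\ex(C)\cap Z\cap\Xreg[Z]$, the inclusion $\pi_2(\Sigma_U)\subset Z^\ast$ via the critical-point condition, the density of $\pi_2(\Sigma_U)$ in $\ol{Y}$, the dimension comparison forcing $Z^\ast=\ol{Y}$, and the biduality contradiction — goes through.
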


\begin{proof}
Let $Z\subset \ol{\ex_a(C)}$ be an irreducible subvariety.
If $\ell\in (\R^{n})^\ast$ defines a supporting hyperplane to an extreme point $x\in \ex(C)$ that is an interior point of the semi-algebraic set $\ex(C)\cap Z$ as a subset of $Z$ and $(1:x)\in\Xreg[Z]$, then the variety $Z$ lies locally in one of the half spaces defined by $(1:\ell)$ and therefore $(1:\ell)$ is tangent to $Z$ at $(1:x)$. In particular, the dimension of the normal cone $N_C(\{x\})$ is bounded by the local codimension of $Z$ at $(1:x)$. Now if $\DX[\ol{Y}]$ is strictly contained in $Z$, it cannot contain $(1:x)$ by Corollary \ref{Thm:DualCondAffine} because $\dim(\DX[\ol{Y}])<\dim(Z)$.
\end{proof}

The set $Z\cap \ex(C)$ in the above corollary does not need to be a regular semi-algebraic set. So the second condition can also occur in the following way. 
\begin{Exm}
Consider the convex hull $C$ of the half ball $\{(x,y,z)\in\R^3\colon x^2+y^2+z^2\leq 1, x\geq 0\}$ and the circle $X = \{(x,y,z)\in\R^3\colon x^2+y^2\leq 1, z=0\}$. The Zariski closure of the extreme points of $C$ is the sphere $S^2$. Every point of the circle $X$ is a regular point of $S^2$ and $X$ is contained in the algebraic boundary of $\ex(C)\cap S^2\subset S^2$, because the semi-algebraic set $\ex(C) \cap S^2$ does not have local dimension $2$ at the extreme points $(x,y,0)\in X\cap \ex(C)$ where $x<0$.
The algebraic boundary of the dual convex set has three irreducible components, namely the sphere $S^2$ and the dual varieties to the two irreducible components $X$ and $\V(y^2+z^2-1,x)$ of $\partial_a(\ex(C)\cap S^2)\subset S^2$.
\end{Exm}

The following examples show how the statement of the corollary can be used to determine the algebraic boundary in concrete cases. 
\begin{Exm}
Consider the spectrahedron $P = \{(x,y,z)\in\R^3\colon Q(x,y,z) \geq 0\}$ where $Q$ is the symmetric matrix
\[
Q = \left(
\begin{array}[h]{cccc}
1 & x & 0 & x \\
x & 1 & y & 0 \\
0 & y & 1 & z \\
x & 0 & z & 1 
\end{array}\right),
\]
studied by Rostalski and Sturmfels in \cite[Section 1.1]{RostalskiSturm} and called pillow. The Zariski closure of the set of extreme points of $P$ is defined by the equation $\det(Q)=0$, where
\[
\det(Q) = x^2(y-z)^2-2x^2-y^2-z^2+1.
\]
The algebraic boundary of the dual convex body $P^o$ is the hypersurface
\begin{eqnarray*}
\partial_a P^o = & \V(b^2+2bc+c^2-a^2b^2-a^2c^2-b^4-2b^2c^2-2bc^3-c^4-2b^3c)\cup \\
 & \V(2-a^2+2ab-b^2+2bc-c^2-2ac)\cup\\
& \V(2-a^2-2ab-b^2+2bc-c^2+2ac),
\end{eqnarray*}
computed in Rostalski-Sturmfels \cite[Section 1.1, Equations 1.7 and 1.8]{RostalskiSturm}. The first quartic is the dual variety to the quartic $\V(\det(Q))$. The two quadric hypersurfaces are products of linear forms over $\R$ and they are the dual varieties to the four corners of the pillow, namely $\frac{1}{\sqrt{2}}(1,1,-1)$, $\frac{1}{\sqrt{2}}(-1,-1,1)$, $\frac{1}{\sqrt{2}}(1,-1,1)$ and $\frac{1}{\sqrt{2}}(-1,1,-1)$. These four points are extreme points of $P$ and singular points of $\V(\det(Q))$.\\
\end{Exm}

Another interesting consequence of Corollary \ref{Thm:DualCondAffine} concerns the semi-algebraic set $\ex(C)$.
\begin{Cor}\label{Cor:ExtPointsCentral}
Let $C\subset\R^n$ be a compact semi-algebraic convex set with $0\in\topint(C)$.
Every extreme point $x$ of $C$ is a central point of the dual variety of at least one irreducible component of $\ol{\partial_a C^o}$ via $\A^n\to\P^n$, $x\mapsto (1:x)$.
\end{Cor}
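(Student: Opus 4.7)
Fix an extreme point $x\in\ex(C)$ and let $Y_1,\ldots,Y_r$ be the irreducible components of $\partial_a C^o$. The plan is to find some $Y_i$ for which $(1:x)$ is a Euclidean limit of smooth real points of $Y_i^\ast$; this is the standard semi-algebraic description of $(1:x)$ being a central point of $Y_i^\ast$.

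The starting observation is that for each $\ell$ in the dense open subset $S=\partial C^o\cap(\partial_a C^o)_{\rm reg}$ of Proposition \ref{Prop:ExpExtPoi}, $\ell$ lies on a unique component $Y_i$, and the projective Gauss map of $Y_i$ sends $\ell$ to $(1:x_\ell)$, where $x_\ell$ is the extreme point of $C$ exposed by $\ell$; in particular $(1:x_\ell)\in Y_i^\ast$. I would then show every $x\in\ex(C)$ is a Euclidean limit $x=\lim_m x_{\ell_m}$ with all the $\ell_m$ on a common component $Y_i$. By Straszewicz's theorem it suffices to treat an exposed extreme point $y$ with exposing functional $\ell_0\in\partial C^o$: density of $S$ in $\partial C^o$ gives $\ell_n\in S$ with $\ell_n\to\ell_0$, and the relation $\ell_n(x_{\ell_n})=-1$ together with compactness of $C$ forces every Euclidean cluster point of $(x_{\ell_n})$ into $\{y\}$, so $x_{\ell_n}\to y$. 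Finiteness of $r$ combined with a diagonal extraction from a Straszewicz approximation $y_k\to x$ then delivers a common $Y_i$ and a sequence $\ell_m\in S\cap(Y_i)_{\rm reg}$ with $x_{\ell_m}\to x$; in particular $(1:x)\in Y_i^\ast$.

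To upgrade this to convergence through smooth real points of $Y_i^\ast$, I would perturb each $\ell_m$ inside the Euclidean open piece of $(Y_i)_{\rm reg}(\R)\cap\partial C^o$ containing it, which is a smooth real hypersurface of dimension $n-1$ around $\ell_m$, since $Y_i$ is smooth at $\ell_m$ and convexity forces $\partial C^o$ to agree with $Y_i(\R)$ in some Euclidean neighbourhood of $\ell_m$. The preimage $\gamma_{Y_i}^{-1}((Y_i^\ast)_{\rm sing})$ is Zariski closed in $(Y_i)_{\rm reg}$ and proper because the Gauss map is dominant onto $Y_i^\ast$, so its real points form a semi-algebraic set of real dimension at most $n-2$ and cannot cover any Euclidean open subset of $(Y_i)_{\rm reg}(\R)$. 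Picking $\ell'_m$ in the complement arbitrarily close to $\ell_m$ gives a smooth real point $(1:x_{\ell'_m})=\gamma_{Y_i}(\ell'_m)\in(Y_i^\ast)_{\rm reg}(\R)$ with $x_{\ell'_m}$ arbitrarily close to $x_{\ell_m}$, and a diagonal extraction produces a sequence in $(Y_i^\ast)_{\rm reg}(\R)$ converging to $(1:x)$. I expect the main difficulty to be precisely this last dimension count: ruling out that the exceptional Gauss preimage locally fills $(Y_i)_{\rm reg}(\R)$ near $\ell_m$, which rests on the bound $\dim_\R W(\R)\leq\dim_\C W$ for real points of a complex subvariety together with the dominance of the Gauss map.
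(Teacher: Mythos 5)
Your argument follows essentially the same route as the paper: reduce to exposed extreme points via Straszewicz, approximate the exposing functional by regular real boundary points on a common component of $\partial_a C^o$, use compactness of $C$ to push the corresponding exposed extreme points to the target, and identify the limit as a point of the dual variety. The one place where you add real content is the final centrality claim: the paper simply asserts that the limit $y$ is a central point of $\DX[\ol{Y}]$, whereas you notice that the Gauss images $x_{\ell_m}$ need not themselves be smooth real points of $Y_i^\ast$, and you close this gap by perturbing each $\ell_m$ inside $\partial C^o\cap (Y_i)_{\rm reg}(\R)$ to avoid the proper closed Gauss preimage of $(Y_i^\ast)_{\rm sing}$, using dominance of the Gauss map and the bound $\dim_\R W(\R)\le\dim_\C W$. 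This is a genuine and correct refinement of a detail that the paper leaves implicit; it does not change the overall strategy. One small remark: the intermediate diagonal over a Straszewicz sequence $y_k\to x$ can be bypassed more cleanly by observing that the central locus of each $Y_i^\ast$ is Euclidean closed, so once each exposed $y_k$ is shown central on some $Y_{i(k)}^\ast$, a single pigeonhole over the finitely many components plus closedness finishes the job.
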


A point $x$ on a real algebraic variety $X\subset\P^n$ is called central if $X(\R)$ has full local dimension around $x$. Equivalently, $x\in X$ is central if it is the limit of a sequence regular real points of $X$, cf.~Bochnak-Coste-Roy \cite[Section 7.6 and Proposition 10.2.4]{BochnakMR1659509}.
\begin{proof}
By Straszewicz's Theorem \cite[Theorem 18.6]{RockafellarMR0274683}, it suffices to prove, that the statement holds for exposed extreme points because every extreme point is the limit of an exposed one. So let $x$ be an exposed extreme point of $C$ and let $F_x =\{\ell\in C^o \colon \ell(x)=-1\}$ be the dual face. Because $x$ is exposed, the normal cone $N_{C^o}(F_x)=\R_+ x$ is $1$-dimensional. Fix a relative interior point $\ell\in F_x$. Let $Y$ be an irreducible component of $\partial_a C^o$ on which $\ell$ is a central point and let $(\ell_j)_{j\in\N}\subset \Xreg[Y](\R)$ be a sequence of regular real points converging to $\ell$ in the euclidean topology. There is a unique (up to scaling) linear functional minimising in $\ell_j$ over $C^o$, namely $y_j\in\partial C$ with $\ell_j(y_j)=-1$ and $\alpha_j(y_j)=-1$ for all $\alpha\in T_{\ell_j}Y$. Since $(y_j)$ is a sequence in a compact set, there exists a converging subsequence; without loss of generality, we assume that $(y_j)_{j\in\N}$ converges and we call the limit $y$. Note that $y$ represents a central point of $\DX[\ol{Y}]$. We know $y\in\partial C$ and
\[
\ell(y) = \lim_{j\to\infty} \ell_j(y) = \lim_{j\to\infty} \ell_j(\lim_{k\to\infty} y_k) =-1,
\]
so $y$ exposes the face $F_x$ of $C^o$ and is therefore equal to $x$ by $N_{C^o}(F_x) = \R_+ x$.
\end{proof}

We take a short look at implications of this corollary to hyperbolicity cones.
\begin{Exm}
A homogeneous polynomial $p\in\R[x_0,\ldots,x_n]$ of degree $d$ is called hyperbolic with respect to $e\in\R^{n+1}$ if $p(e)\neq 0$ and the univariate polynomial $p(te - x)\in\R[t]$ has only real roots for every $x\in\R^{n+1}$. We consider the set
\[
C_p(e) = \{x\in\R^{n+1}\colon \textrm{all roots of } p(te-x) \textrm{ are non-negative}\},
\]
which is called the hyperbolicity cone of $p$ (with respect to $e$). It turns out to be a convex cone, cf.~\cite{RenegarMR2198215}. Assume that all non-zero points in the boundary of $C_p(e)$ are regular points of $\V(p)$. Then by Corollary \ref{Cor:AlgBoundLocExtPoi} the algebraic boundary of the dual convex cone is the dual variety to $\V(q)$ where $q$ is the unique irreducible factor of $p$ which vanishes on $\partial C_p$.

The assumption on the hyperbolicity cone being smooth is essential: Consider the hyperbolicity cone of $p = y^2z-(x+z)(x-z)^2\in\R[x,y,z]$ with respect to $(0,0,1)$. The cubic $\V(p)\subset\R^3$ is singular along the line $\R(1,0,1)$ and the algebraic boundary of the dual convex cone has an additional irreducible component, namely the hyperplane dual to this line because the normal cone has dimension $2$ at this extreme ray, see Figure \ref{fig:Schleife}.

Let now $C_p(e)$ be any hyperbolicity cone and decompose $\partial_a C_p(e) = X_1\cup \ldots\cup X_r$ into its irreducible components $X_1,\ldots,X_r$. The dual convex cone $C_p(e)^\vee$ is the conic hull of the regular real points of the dual varieties of the irreducible components $X_i$ up to closure, i.e.
\[
C_p(e)^\vee = {\rm cl}(\co( (\DX[X_1])_{\rm reg}(\R) \cup \ldots \cup (\DX[X_r])_{\rm reg}(\R) )).
\]
Indeed, the right hand side contains every central point of every variety $\DX[X_i]$ and by Corollary \ref{Cor:ExtPointsCentral}, this gives one inclusion. Conversely, let $\ell$ be a general real point of $\DX[X_i]$ for any $i$. Then $\ell$ is tangent to $X_i$ in a regular real point of $\partial_a C_p(e)$ and by hyperbolicity of $p$, the linear functional has constant sign on the hyperbolicity cone $C_p(e)$ because every line through the hyperbolicity cone intersects every regular real point of $\partial_a C_p(e)$ with multiplicity $1$, cf.~Plaumann-Vinzant \cite[Lemma 2.4]{PlauVinMR3066450}.
\end{Exm}

How can we compute these exceptional varieties of extreme points? Given the algebraic boundary of the dual convex set, the following theorem gives an answer.
In its statement, we use an iterated singular locus: The $k$-th iterated singular locus of a variety $X$, denoted by $X_{k,{\rm sing}}$, is the singular locus of the $(k-1)$ iterated singular locus. The $1$-st iterated singular locus is the usual singular locus of $X$.
\begin{Thm}\label{Thm:comp}
Let $C\subset \R^n$ be a compact semi-algebraic convex set with $0\in \topint(C)$ and suppose that every point $\ell\in\partial C^o$ is a regular point on every irreducible component of $\partial_a C^0$ containing it. Let $Z\subset \ex_a(C^o)$ be an irreducible subvariety such that $\DX[\ol{Z}]$ is an irreducible component of $\ol{\partial_a C}$. If $\codim(Z) = 1$, then $Z$ is an irreducible component of $\partial_a C^o$.
If $\codim(Z) = c > 1$, then $Z$ is an irreducible component of an iterated singular locus, namely it is an irreducible component of one of the varieties $\Xsing[(\partial_a C^o)], (\partial_a C^o)_{2,{\rm sing}},\ldots, (\partial_a C^o)_{c-1, {\rm sing}}$.
\end{Thm}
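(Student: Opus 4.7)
The plan is to split on $\codim(Z) = c$. If $c = 1$, then $Z \subset \ex_a(C^o) \subset \partial_a C^o$ is an irreducible variety of dimension $n - 1$, matching the dimension of every irreducible component of the hypersurface $\partial_a C^o$ by Lemma \ref{Lem:AlgBoundHyp}, so $Z$ is automatically a component.

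For $c > 1$, I would introduce
\[
k_Z := \max\{\, k \geq 0 \;:\; Z \subset (\partial_a C^o)_{k, {\rm sing}}\,\},
\]
with the convention $(\partial_a C^o)_{0, {\rm sing}} := \partial_a C^o$, and prove (i) $1 \leq k_Z \leq c - 1$ and (ii) $Z$ is an irreducible component of $(\partial_a C^o)_{k_Z, {\rm sing}}$. The upper bound in (i) is a pure dimension count: iterated singular loci strictly drop dimension, so $\dim (\partial_a C^o)_{k, {\rm sing}} \leq n - 1 - k$, whereas $\dim Z = n - c$ by Corollary \ref{Thm:DualCondAffine}, forcing $k_Z \leq c - 1$. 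For the lower bound, the converse part of Corollary \ref{Thm:DualCondAffine} gives $\dim N_{C^o}(\{\ell\}) = c \geq 2$ at a general extreme point $\ell \in Z$. Each extreme ray of $N_{C^o}(\{\ell\})$ is a limit of supporting hyperplanes at nearby regular boundary points of $\partial_a C^o$; by Proposition \ref{Prop:ExpExtPoi} these are tangent hyperplanes to the unique component at those nearby points, and combined with the theorem's hypothesis that $\ell$ is regular on every component of $\partial_a C^o$ containing it, each extreme ray equals $T_\ell X_i$ for some component $X_i \ni \ell$. Since $N_{C^o}(\{\ell\})$ has at least $c \geq 2$ distinct extreme rays, at least two components of $\partial_a C^o$ meet at $\ell$ with distinct tangent hyperplanes, so $\ell$ is a singular point of $\partial_a C^o$; hence $Z \subset (\partial_a C^o)_{1, {\rm sing}}$, i.e., $k_Z \geq 1$.

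For (ii), maximality of $k_Z$ forces a general $\ell \in Z$ to be a regular point of $(\partial_a C^o)_{k_Z, {\rm sing}}$, so locally this variety is smooth and its local dimension $d$ equals the dimension of the unique irreducible component $W$ of $(\partial_a C^o)_{k_Z, {\rm sing}}$ through $\ell$. Since $Z \subset W$ and both are irreducible, it suffices to prove $d = \dim Z$. This dimension match is the main technical obstacle: one performs a local analysis at $\ell$, writing $\partial_a C^o$ as a union of smooth branches $X_1 \cup \cdots \cup X_k$ whose tangent hyperplanes span a $c$-dimensional subspace (dictated by the normal-cone dimension), and reads off how the iterated singular loci cut out the stratum of points where sufficiently many branches meet. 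In the transversal configuration (tangent hyperplanes in general position, forcing $k = c$), the $m$-th iterated singular locus locally equals the union of $(m+1)$-fold transverse intersections, so $k_Z = c - 1$ and $W = \bigcap_i X_i$ has dimension $n - c = \dim Z$, as desired. The non-transversal case is more delicate; I would handle it by stratifying $Z$ according to the combinatorial intersection pattern of the local branches and invoking upper semi-continuity of the normal-cone dimension to restrict to a generic point of $Z$ where a transversal local model applies, still yielding $\dim W = \dim Z$ and hence $Z = W$.
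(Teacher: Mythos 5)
Your plan agrees with the paper through the easy parts: the codimension-$1$ case is indeed automatic (an irreducible $(n-1)$-dimensional subvariety of the hypersurface $\partial_a C^o$ is a component); the upper bound $k_Z\leq c-1$ follows exactly as you say from $\dim(\partial_a C^o)_{k,\mathrm{sing}}\leq n-1-k$ and $\dim Z=n-c$; and the lower bound $k_Z\geq 1$ is right, because a regular point of $\partial_a C^o$ has a one-dimensional normal cone, whereas $\dim N_{C^o}(\{\ell\})=c\geq 2$ by Corollary~\ref{Thm:DualCondAffine} (applied with the roles of $C$ and $C^o$ exchanged).

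The genuine gap is step~(ii), the claim $\dim W=\dim Z$, which you correctly flag as the hard part but do not actually close. Two problems. First, your "transversal configuration'' does not force $k=c$: you can easily have $k>c$ local branches with pairwise distinct tangent hyperplanes whose normals still only span a $c$-dimensional space (for $c=2$ take three coplanar but pairwise independent normals). In that configuration the pairwise-intersection strata of $\Xsing[(\partial_a C^o)]$ already have dimension $n-2=n-c$, so the ``$m$-th iterated singular locus $=$ $(m+1)$-fold intersections'' count is wrong, and the claimed identity $k_Z=c-1$ fails. Second, the remedy you sketch for the non-transversal case — stratify $Z$ by the combinatorial intersection pattern of branches and invoke upper semi-continuity of the normal-cone dimension — is not an argument: nothing in that outline controls the dimension of the component $W$ of $(\partial_a C^o)_{k_Z,\mathrm{sing}}$ through a general $\ell\in Z$, which is exactly what you need. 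The paper avoids this local branch bookkeeping entirely by an inductive descent. It first pins down $\T_\ell Z = N_{C^o}(\{\ell\})^\perp$ (the extreme rays of the normal cone span a $c$-dimensional space lying in $(\T_\ell Z)^\perp$, and $\dim Z=n-c$, so equality holds). Then, if $Z\subsetneq Y$ for some component $Y$ of $(\partial_a C^o)_{k,\mathrm{sing}}$ with $\ell\in\Xreg[Y]$, one has $\T_\ell Z\subsetneq\T_\ell Y$, hence some extreme ray $\R_+x$ of $N_{C^o}(\{\ell\})$ does not vanish on $\T_\ell Y$. By Corollary~\ref{Cor:ExtPointsCentral} and the hypothesis that $\ell$ is regular on every component of $\partial_a C^o$ through it, $x\in(\T_\ell X)^\perp$ for some component $X$ of $\partial_a C^o$, and $x\vert_{\T_\ell Y}\neq 0$ forces $X$ and $Y$ to meet transversally at $\ell$. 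So $Z\subset Y\cap X\subsetneq Y$, and $Y\cap X$ sits in a strictly deeper iterated singular locus because the multiplicity in $\partial_a C^o$ jumps. Iterating, the dimension drops strictly and is bounded below by $\dim Z$, so the descent terminates with $Z$ as a component. Your invariant $k_Z$ then comes out a posteriori, rather than being computed directly from a local model. To fix your proof you would need to replace the branch count by this transversality-and-descent argument (or supply a genuinely complete local analysis that handles $k>c$).
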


\begin{proof}
Assume $\codim(Z)=c>1$ and let $\ell\in Z\cap \ex(C^o)$ be a general point. Since Whitney's condition a is satisfied for $(\Xreg,Z)$ at $\ell$ for every irreducible component $X\subset \partial_a C^o$ with $Z\subset X$ by Bochnak-Coste-Roy \cite[Theorem 9.7.5]{BochnakMR1659509}, every extreme ray $\R_+ x$ of $N_{C^o}(\{\ell\})$ is tangent to $Z$ at $\ell$ by Corollary \ref{Cor:ExtPointsCentral}. Since the extreme rays of the normal cone $N_{C^o}(\{\ell\})$ span the smallest linear space containing it, the dimension of $Z$ is bounded from above by $\codim(N_{C^o}(\{\ell\}))$. The assumption that $\DX[\ol{Z}]$ is an irreducible component of $\ol{\partial_a C}$ implies $\dim(Z) = \codim(N_{C^o}(\{\ell\}))$ by Corollary \ref{Thm:DualCondAffine}.
It follows that the tangent space $\T_\ell Z$ is the lineality space of the convex cone $N_{C^o}(\{\ell\})^\vee$.
To show that $Z$ is an irreducible component of $(\partial_a C^o)_{j,{\rm sing}}$, suppose $Y\subset (\partial_a C^o)_{k,{\rm sing}}$ is an irreducible component with $Z\subsetneq Y$ and $\Xreg[Y]\cap Z\neq \emptyset$ and let $\ell\in Z\cap \ex(C^o)$  be a general point with $\ell \in \Xreg[Y]$. Then $\T_\ell Z\subsetneq \T_\ell Y$ and there is an extreme ray $\R_+ x$ of $N_{C^o}(\ell)$ with $x\in\ex(C)$ and $x\vert_{\T_\ell Y} \neq 0$. By Corollary \ref{Cor:ExtPointsCentral}, there is an irreducible component $X\subset \partial_a C^o$ such that $x$ is a central point of $\DX$. So by assumption, $\ell\in\Xreg$ and $x\in (\T_\ell X)^\perp$. Since $x\vert_{\T_\ell Y}\neq 0$, the varieties $Y$ and $X$ intersect transverally at $\ell$. So $Z\subset Y\cap X \subsetneq Y$ and $Y\cap X\subset (\partial_a C^o)_{j,{\rm sing}}$ is an irreducible component for some $j>k$ because the multiplicity of a point in $Y\cap X$ in $\partial_a C^o$ is higher than the multiplicity of a general point on $Y$. Induction on the codimension of $Z$ proofs the theorem.
\end{proof}

\begin{Rem}
(a) This theorem gives a computational way to get a list of candidates for the dual varieties to irreducible components of the algebraic boundary of $C$, given the algebraic boundary of $C^o$. Certain of these candidates may fail to contribute an irreducible component due to semi-algebraic constraints. For illustration, we will apply it to two examples.\\
(b) The assumption that all irreducible components of $\partial_a C^o$ are smooth along the boundary of $C^o$ is used to show that the stratification into iterated singular loci is sufficient in this case. In general, it may be necessary to refine this stratification such that Whitney's condition a is satisfied for all adjacent strata, see Example \ref{Exm:whitney}.
\end{Rem}

\begin{Exm}[cf.~Remark \ref{Rem:twoParabolas}]
We consider the convex set $C\subset\R^2$ in the plane defined by the two inequalities $x^2+y^2\leq 1$ and $x\leq 3/5$, see Figure \ref{fig:CirclePlaneDuality}. Its algebraic boundary is the plane curve $\V( (x^2+y^2-1)(x-3/5))$.
The dual convex body is the convex hull of the set $\{(X,Y)\in\R^2 \colon X^2+Y^2\leq 1, X\geq -3/5\}$ and the point $(-5/3,0)$. Its algebraic boundary is the curve $\partial_a C^o = \V( (X^2+Y^2-1)(4Y-3X-5)(4Y+3X+5))$. Its three irreducible components are smooth and its singular locus consists of three points, namely $(-5/3,0)$ and $(-3/5,\pm 4/5)$.
By the above theorem, a complete list of candidates for the algebraic boundary of $C$ are the dual varieties to the circle $\V(X^2+Y^2-1)$ and the irreducible components of the first iterated singular locus, i.e.~the lines dual to the points $(-5/3,0)$ and $(-3/5,\pm 4/5)$. In fact, the last two points do not contribute an irreducible component to $\partial_a C$, because the normal cone to $C^o$ at these points is $1$-dimensional, cf.~Corollary \ref{Thm:DualCondAffine}.\\
We can also look at it dually and compute the algebraic boundary $\partial_a C^o$ from the singularities of the algebraic boundary of $C$: The curve $\partial_a C$ is reducible, all components are smooth, and its singular locus consists of two points, namely $(3/5,\pm 4/5)$. Both of these points dualize to irreducible components of $\partial_a C^o$.
\end{Exm}

\begin{Exm}
As an example in $3$-space, consider the convex set $C$ defined as the intersection of two affinely smooth cylinders given by the inequalities $x^2+y^2\leq 1$ and $3y^2 + 4z^2 - 4y \leq 4$. The algebraic boundary of $C$ is the (reducible) surface $\V( (x^2+y^2-1)(3y^2+4z^2-4y-4))$, whose singular locus is a smooth curve of degree $4$, namely the intersection of the two cylinders. Since the dual varieties to the cylinders are curves and the iterated singular loci of $\partial_a C$ are this smooth curve of degree $4$ or empty, the algebraic boundary of the dual convex body is, by Theorem \ref{Thm:comp}, the dual variety of this curve, which is a surface of degree $8$ defined by the polynomial
{\small
\begin{eqnarray*}
& & -240 X^{8}-608 X^{6} Y^{2}-240 X^{4} Y^{4}+384 X^{2} Y^{6}+256 Y^{8}+840 X^{6} Z^{2} \\
& & +696 X^{4} Y^{2} Z^{2}-192 X^{2} Y^{4} Z^{2}+384 Y^{6} Z^{2}-1215 X^{4} Z^{4}+696 X^{2} Y^{2} Z^{4} \\
& & -240 Y^{4} Z^{4} +840 X^{2} Z^{6}-608 Y^{2} Z^{6}-240 Z^{8}-896 X^{6} Y-2304 X^{4} Y^{3} \\
& & -1920 X^{2} Y^{5}-512 Y^{7}+1152 X^{4} Y Z^{2}+192 X^{2} Y^{3} Z^{2} +768 Y^{5} Z^{2}-1848 X^{2} Y Z^{4} \\
& & +2784 Y^{3} Z^{4}+1504 Y Z^{6}+832 X^{6}+1312 X^{4} Y^{2}-160 X^{2} Y^{4} -640 Y^{6}-984 X^{4} Z^{2} \\
& & -4144 X^{2} Y^{2} Z^{2}-3520 Y^{4} Z^{2}-234 X^{2} Z^{4}-2504 Y^{2} Z^{4}+232 Z^{6} \\
& & +2176 X^{4} Y+3584 X^{2} Y^{3}+1408 Y^{5}+2048 X^{2} Y Z^{2}+576 Y^{3} Z^{2}-1640 Y Z^{4} \\
& & -800 X^{4}-288 X^{2} Y^{2}+656 Y^{4}-424 X^{2} Z^{2}+2808 Y^{2} Z^{2}+313 Z^{4}-1664 X^{2} Y \\
& & -1280 Y^{3}-128 Y Z^{2}+64 X^{2}-416 Y^{2}-456 Z^{2}+384 Y+144.
\end{eqnarray*}
}
Viewed dually, this example is more complicated. The algebraic boundary of $C^o$ is the surface of degree $8$ defined by the above polynomial, which has singularities along the boundary of $C^o$. So the above theorem is not applicable in this case but the conclusion is still true and we compute the iterated singular loci for demonstration.
The singular locus of the surface has $4$ irreducible components: the dual varieties to the cylinders, which are circles, namely $\V(Z,X^2+Y^2-1)$ and $\V(X,4Y^2+4Z^2-4Y-3)$, a complex conjugate pair of quadrics $\V(2 Y^{2}-Y+2, 4 X^{2}-3 Z^{2}-2 Y Z^{2}+8 Y-4)$, and a curve of degree $12$, which we denote by $X_{12}$.
 The second iterated singular locus, which is the singular locus of the union of these $4$ irreducible curves, consists of $24$ points. $16$ of them are the singular points of $X_{12}$ and the other $8$ points are intersection points of $X_{12}$ with the complex conjugate pair of quadrics $\V(2 Y^{2}-Y+2, 4 X^{2}-3 Z^{2}-2 Y Z^{2}+8 Y-4)$. The two circles dual to the cylinders intersect the curve $X_{12}$ only in singular points of the latter. There are no other intersection points of the irreducible components of $\Xsing[(\partial_a C^o)]$. Of these $24$ points in $(\partial_a C^o)_{2,{\rm sing}}$ only $4$ are real. They are $(\pm \sqrt{5/9},2/3,0)$ and $(0,-1/6,\pm \sqrt{5/9})$.
Now the difficult job is to exclude those varieties that do not contribute irreducible components to the algebraic boundary of $C$. The dual variety to $\partial_a C^o$ is only a curve, so it cannot be an irreducible component of $\partial_a C$. Next, we discuss the irreducible components of $\Xsing[(\partial_a C^o)]$: The dual varieties to the complex conjugate pair of quadrics cannot be an irreducible component of $\partial_a C$ either, because the real points will not be dense in this hypersurface. Why the dual variety to the curve $X_{12}$ is not an irreducible component of $\partial_a C$ is not obvious. Of the irreducible components of $(\partial_a C^o)_{2,{\rm sing}}$, the $4$ real points must be considered as potential candidates for dual varieties to irreducible components of $\partial_a C$.
\end{Exm}

To close, we want to consider an example of a convex set whose algebraic boundary is not smooth along its euclidean boundary and for which the conclusion of the Theorem \ref{Thm:comp} is false. As remarked above, the stratification into iterated singular loci must be refined to a stratification that is Whitney a-regular.
\begin{Exm}\label{Exm:whitney}
Consider the surface in $\A^3$ defined by
\[
f = (z^2+y^2 - (x+1)(x-1)^2)(y-5(x-1))(y+5(x-1)),
\]
which is the union of an irreducible cubic and two hyperplanes meeting along the line $\V(x-1,y)$. The cubic surface is a rotation of the nodal curve shown in Figure \ref{fig:Schleife} on the left along the $x$-axis, so the convex set $C$ bounded by the cubic looks like a teardrop. We consider the extreme point $p=(1,0,0)$ of $C$: The normal cone is two-dimensional and so the dual hyperplane $p^\perp$ is an irreducible component of the algebraic boundary of $C^o$. Indeed, the point $p$ is a singular point of the cubic that lies on the line $\V(x-1,y)$, which is an irreducible component of the singular locus of the reducible surface $\V(f)$, so $p$ cannot be found by computing the iterated singular loci of $\V(f)$.
We make this discussion relevant by perturbing the above polynomial $f$ in such a way that it becomes irreducible and shows the same behaviour: Consider the polynomial
\[
g = f + \frac{1}{10} (x-1)yz^2,
\]
which is irreducible over $\Q$. The surface $\V(g)\subset \A^3$ is the algebraic boundary of a convex set $C'$, a perturbation of the teardrop $C$. Convexity of $C'$ can be checked by writing $z$ as a function of $x$ and $y$ and checking its convexity resp.~concavity using its Hessian matrix (note that $z$ only occurs to the power of $2$ in $g$).
The point $p$ is also an extreme point of $C'$ and the normal cone at $p$ relative to $C'$ is still $2$-dimensional. Yet the algebraic boundary of $C'$ is only singular along the line $\V(x-1,y)$, which is a smooth curve. So we don't find $\{p\}$ as an irreducible component of an iterated singular locus of $\partial_a C'=\V(g)$.

Note that Whitney's condition a for $(\V(g),\V(x-1,y))$ is not satisfied at $p$ because a hyperplane that is in limiting position for supporting hyperplanes to the teardrop $C'$ do not contain the line $\V(x-1,y)$. Refining the stratification of iterated singular loci into a Whitney a-regular stratification would detect this special extreme point.
\end{Exm}

\textbf{Acknowledgements.} This work is part of my PhD thesis. I would like to thank my advisor Claus Scheiderer for his encouragement and support, the Studienstiftung des deutschen Volkes for their financial and ideal support of my PhD project, and the National Institute of Mathematical Sciences in Korea, which hosted me when I finished this paper.

\end{document}